\documentclass{article}
\usepackage{graphicx} % Required for inserting images

\usepackage[shortlabels]{enumitem}
\usepackage[hmargin = 1in]{geometry}
\usepackage{color}

\usepackage{amsmath,amssymb,amsthm}
\usepackage{subfig}

\newtheorem{thm}{Theorem}
\newtheorem{conj}[thm]{Conjecture}
\newtheorem{obs}[thm]{Observation}
\newtheorem{cla}[thm]{Claim}
\newtheorem{lem}[thm]{Lemma}
\newtheorem{prob}[thm]{Problem}
\newtheorem{remark}[thm]{Remark}
\newtheorem{oq}[thm]{Open Question}

\usepackage{extarrows}

\usepackage{tikz}
\usetikzlibrary{scopes}

\title{Every connected subcubic graph except the Petersen graph \\ is packing $(1,1,2,2)$-colorable}
\author{Xinmin Hou$^{a,c}$, Xujun Liu$^b$, Xiangyang Wang$^a$\\
	\small$^a$ School of Mathematical Sciences,\\
			\small University of Science and Technology of China, Hefei 230026, Anhui, China\\
    \small$^b$ Department of Applied Mathematics, School of Mathematics and Physics,\\
    \small Xi'an Jiaotong-Liverpool University, Suzhou 215123, Jiangsu, China\\
			\small$^c$ Hefei National Laboratory,\\
\small University of Science and Technology of China, Hefei 230088, Anhui, China\\
\small Email: xmhou@ustc.edu.cn (X. Hou), xujun.liu@xjtlu.edu.cn (X. Liu), wangxiangyang@mail.ustc.edu.cn (X. Wang)
}

\date{}

\begin{document}

\maketitle

\begin{abstract}
For a non-decreasing sequence $S = (s_1, s_2, \ldots, s_k)$ of positive integers, a packing $S$-coloring of a graph $G$ is a partition of $V(G)$ into $V_1, V_2, \ldots, V_k$ such that each $V_i$ has pairwise distance at least $s_i+1$. The packing chromatic number (PCN) of a graph $G$ is the minimum $k$ such that $G$ has a packing $(1,2, \ldots, k)$-coloring. The $1$-subdivision of $G$ is obtained by replacing each edge of $G$ with a path of two edges. In 2016, Gastineau and Togni asked an open question whether the $1$-subdivision of every subcubic graph has PCN at most $5$, and later Bre\v sar, Klav\v zar, Rall, and Wash conjectured it is true. Balogh, Kostochka, and Liu proved the first upper bound of $8$, and it was later improved to $6$ by Liu, Zhang, and Zhang. 

In this paper, we prove that every connected subcubic graph except the Petersen graph is packing $(1,1,2,2)$-colorable. Our result implies a solution to the conjecture of Bre\v sar, Klav\v zar, Rall, and Wash, and answers the question of Gastineau and Togni in the affirmative. Furthermore, our result answers an open question of Kostochka and Liu and solves a conjecture of Liu, Zhang, and Zhang.
\end{abstract}
%The question has been opened for ten years. An $i$-independent set in a graph $G$ is a vertex set whose pairwise distance is at least $i+1$. 
%For a given graph $G$, a packing $S$-coloring for a sequence $(s_1,s_2,\dotsc,s_k)$ of non-decreasing integers is a partition of the vertex set $V(G)$ into subsets $V_1,V_2,\dotsc,V_k$  such that for every pair of distinct vertices $u,v\in V_i$, where $1\le i\le k$, the distance between $u$ and $v$ is at least $s_i + 1$. The packing chromatic number $\chi_p(G)$ of a graph $G$ is defined as the smallest integer $k$ for which $G$ admits a packing $(1,2,\dotsc,k)$-coloring. In this paper, we prove that every subcubic graph $G$ without the Petersen graph has a packing $(1,1,2,2)$-coloring. This also resolves a conjecture proposed by Bre\v{s}ar et al.  That is, the $1$-subdivision of each subcubic graph is packing $(1,2,3,4,5)$-colorable.

\section{Introduction}

An $i$-independent set in a graph $G$ is a vertex set whose pairwise distance is at least $i+1$. For a non-decreasing sequence $S = (s_1, s_2, \ldots, s_k)$ of positive integers, a packing $S$-coloring of a graph $G$ is a partition of $V(G)$ into $V_1, V_2, \ldots, V_k$ such that each $V_i$ is $s_i$-independent. The notion was introduced by Goddard and Xu~\cite{GX1} and has been widely studied (e.g., see~\cite{BKL1, BKL2, BF1, BKRW1, BKR1, FKL1, GT1, GHHHR1, KL1, LW1, LZZ1, MT1, MT2}). Its edge counterpart was introduced by Gastineau and Togni~\cite{GT2} and has received much attention recently (e.g., see~\cite{HLL1,LSS1}). We use exponents to denote repetitions in a sequence. For example, $(1,1,2,2,2)$ is denoted as $(1^2,2^3)$.

%{\color{magenta} We also write $S$ compactly as $(a_1^{k_1}, a_2^{k_2}, \dots,a_m^{k_m})$ where each $a_i$ is a distinct value and $k_i$ its multiplicity.}

The celebrated Four-Color-Theorem states that every planar graph is packing $(1^4)$-colorable. The square of a graph $G$, denote by $G^2$, is obtained from $G$ by adding edges joining vertices of distance exactly two. A {\em square $k$-coloring} of a graph $G$, denoted by $\chi_2(G)$, is the minimum $k$ such that $G^2$ can be properly $k$-colored. Using the packing coloring notation, it is equivalent to a packing $(2^k)$-coloring.  Square coloring of planar graphs was first studied by Wegner~\cite{W1} in 1977, who posed a conjecture that if $G$ is a planar graph of maximum degree at most $\Delta$, then $\chi_2(G) \le 7$ if $\Delta(G) = 3$, $\chi_2(G) \le \Delta(G) + 5$ if $4 \le \Delta(G) \le 7$, and $\chi_2(G) \le \lfloor \frac{\Delta}{2} \rfloor + 1$ if $\Delta \ge 8$. Thomassen~\cite{T1}, and independently Hartke, Jahanbekam, and Thomas~\cite{HJT1} confirmed Wegner's conjecture for $\Delta = 3$. Bousquet, Deschamps, de Meyer, and Pierron~\cite{BDMP1} proved the current best upper bound of $12$ when $\Delta = 4$. For non-planar graphs, Cranston and Kim~\cite{CK1} showed that every connected subcubic graph except the Petersen graph is square $8$-colorable, and this bound is tight. 
%Research of graph coloring theory can be traced back to the proof of celebrated Four-Color-Problem~cite{AH,AHK,}.

We notice that a packing $(1^j,2^k)$-coloring can be viewed as an intermediate coloring between proper coloring and square coloring. Gastineau and Togni~\cite{GT1} proved that every subcubic graph has a packing $(1,1,2,2,2)$-coloring and a packing $(1,2,2,2,2,2,2)$-coloring. They also showed that the Petersen graph has no packing $(1,1,2,2)$-coloring and no packing $(1,2,2,2,2,2)$-coloring. Liu and Wang~\cite{LW1} proved an analogue result of Thomassen~\cite{T1} that every subcubic planar graph has a packing $(1,2,2,2,2,2)$-coloring. Packing colorings of general graphs (non-subcubic) was studied by Choi and Liu~\cite{CL1}, as well as Mortada and Togni~\cite{MT2}. We believe the study of packing $(1^j,2^k)$-coloring can provide insights on how to solve hard problems in square coloring, such as Wegner's conjecture.

%Thomassen~\cite{T1}, and independently Hartke, Jahanbekam, and Thomas~\cite{HJT1} confirmed the above conjecture when $\Delta = 3$. Wegner's conjecture remains open for all $\Delta \ge 4$.
%Recently, Bousquet, Deschamps, de Meyer, and Pierron~\cite{BDMP1} proved the current best upper bound of $12$ when $\Delta = 4$. 
%Amini, Esperet, and van den Heuvel~\cite{AEH1}, and independently Havet, van den Heuvel, McDiarmid, and Reed~\cite{HHMR1,HHMR2} showed that the conjecture holds asymptotically, namely, $\chi_2(G) \le \frac{3}{2} \Delta + o (\Delta)$ as $\Delta \to \infty$. For non-planar graphs, Cranston and Kim~\ref{CK1} showed that every subcubic graph except the Petersen graph is square $8$-colorable. 

The packing chromatic number (PCN) of a graph $G$, denoted by $\chi_p(G)$, is the smallest positive integer such that $G$ has a packing $(1,2, \ldots, k)$-coloring. The notion of PCN was introduced by Goddard, Hedetniemi, Hedetniemi, Harris, and Rall \cite{GHHHR1} under the name broadcast coloring. There are more than 50 papers on the topic (e.g., see~\cite{BKL1, BKL2,BF1,BKR1,BKRW1,FKL1,GT1,KL1,LZZ1,ZM1}). The question whether the PCN of all subcubic graphs was bounded by a constant was first asked by Goddard et al~\cite{GHHHR1} and garnered significant interest in the community (e.g., see~\cite{BKRW1,GT1}). It was finally answered in the negative by Balogh, Kostochka, and Liu~\cite{BKL1}, as well as Bre\v sar and Ferme~\cite{BF1}, using the probabilistic method and explicit construction, respectively. 

The $1$-subdivision of a graph $G$ is obtained by replacing each edge with a path of two edges. Gastineau and Togni~\cite{GT1} asked an open question in 2016 that whether the PCN of the $1$-subdivision of subcubic graphs is bounded by $5$. This is one of the most popular open questions in the topic of packing coloring.

\begin{oq}[Gastineau and Togni~\cite{GT1}]\label{openquestion1}
 Is it true that the $1$-subdivision of subcubic graphs is packing $(1,2,3,4,5)$-colorable? 
\end{oq}

In 2017, Bre\v{s}ar, Klav\v{z}ar, Rall and Wash~\cite{BKRW1} provided a packing $(1,2,3,4,5)$-coloring for the $1$-subdivision of the Petersen graph and conjectured Question~\ref{openquestion1} is true.

\begin{conj}[Bre\v sar, Klav\v zar, Rall, and Wash~\cite{BKRW1}]\label{mainconj}
The $1$-subdivision of every subcubic graph has PCN at most $5$.
\end{conj}

Gastineau and Togni~\cite{GT1} observed that if one can prove every connected subcubic graph except the Petersen graph is packing $(1,1,2,2)$-colorable, then the $1$-subdivision of every subcubic graph has PCN at most $5$. Many progresses has been made toward Question~\ref{openquestion1} and Conjecture~\ref{mainconj}, but it still remains open since 2016. In particular, Bre\v{s}ar, Klav\v{z}ar, Rall and Wash~\cite{BKRW1} proved Conjecture~\ref{mainconj} for all generalized prisms of cycles. Bre\v{s}ar, Kuenzel, Rall~\cite{BKR1} proved the existence of packing $(1,1,2,2)$-coloring for claw-free subcubic graphs and thus confirmed Conjecture~\ref{mainconj} for claw-free cubic graphs. Balogh, Kostochka, and Liu~\cite{BKL1} showed the first upper bound ($8$) toward Conjecture~\ref{mainconj}. Kostochka and Liu~\cite{KL1} proved every subcubic outerplanar graphs has a packing $(1,1,2,4)$-coloring, which confirms Conjecture~\ref{mainconj} for subcubic outerplanar graphs. Furthermore, they posed the following open question.

\begin{oq}[Kostochka and Liu~\cite{KL1}]\label{openquestion2}
Is it true that every subcubic planar graph is packing $(1,1,2,2)$-colorable?    
\end{oq}

Very recently, Liu, Zhang, and Zhang~\cite{LZZ1} proved every subcubic graph is packing $(1,1,2,2,3)$-colorable. Their result implies an upper bound of $6$ for Conjecture~\ref{mainconj}. Furthermore, they posed a stronger conjecture.

\begin{conj}[Liu, Zhang, and Zhang~\cite{LZZ1}]\label{conjecture2}
Every connected subcubic graph except the Petersen graph is packing $(1,1,2,2)$-colorable.    
\end{conj}

Zein and Mortada~\cite{ZM1} further improved the result of Liu, Zhang, and Zhang~\cite{LZZ1} by showing that every subcubic graph has a $(1,1,2,2,k)$ coloring, where $k$ can be any positive integer and is used at most once. They also proved that every $2$-degenerate subcubic graph has a packing $(1,1,2,2)$-coloring. In this paper, we confirm Conjecture~\ref{conjecture2}.

\begin{thm}\label{mainthe}
Every connected subcubic graph $G$ except the Petersen graph is packing $(1,1,2,2)$-colorable. 
\end{thm}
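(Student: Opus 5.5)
We argue by a minimal counterexample. Let $G$ be a connected subcubic graph, not the Petersen graph, with no packing $(1,1,2,2)$-coloring, and with $|V(G)|+|E(G)|$ minimum. We write the color classes as $A,B$ (the $1$-independent classes) and $C,D$ (the $2$-independent classes). By the result of Zein and Mortada, every $2$-degenerate subcubic graph is packing $(1,1,2,2)$-colorable. So we may assume $G$ is cubic; more precisely, since minimum degree at most $2$ plus 2-degeneracy of proper subgraphs is the typical reduction, we first show that $G$ has minimum degree $3$ directly. Here is why. If $v$ has degree at most $2$, then $G-v$ has components that are not the Petersen graph: a Petersen component would be cubic and could not be adjacent to $v$. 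By minimality each such component is colored. We then try to extend to $v$. The vertex $v$ may take color $A$ or $B$ unless its at most two neighbors use both $A$ and $B$. In that case its neighbors use no $C/D$ colors, and we need to recolor within distance $2$. Handling these few local configurations, and the case where a component of $G-v$ is Petersen plus a pendant, is routine.

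For cubic $G$, we establish structural reducible configurations in order: no triangles, then no $4$-cycles, then no $5$-cycles sharing edges in certain ways, and so on. Each reduction deletes a small subgraph $H$ (a short cycle, or an edge with its neighborhood), or contracts or identifies vertices to keep the graph subcubic. We color the rest by minimality and extend. The basic tool is a counting or flexibility argument. A vertex $u$ whose neighborhood leaves $A$ or $B$ free is trivially colorable. A vertex is blocked only when its neighbors use both $A$ and $B$ and its second neighborhood (at most $6$ vertices) uses both $C$ and $D$. In that case we try Kempe-type swaps: exchange $A\leftrightarrow B$ on an $A/B$ component, which preserves validity, or swap $C\leftrightarrow D$ on suitable vertices at distance $\ge 3$. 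Each reduction must also verify that the smaller graph is not the Petersen graph. When it is, we color $G$ explicitly, since there are finitely many such graphs of order about $10$–$12$.

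After excluding short cycles, $G$ is cubic with large girth, say at least $6$ or $7$. We then finish with a global argument. We take a maximum $2$-independent set structure or a discharging-free direct construction: choose $C\cup D$ as a set such that $G-(C\cup D)$ is bipartite, then $2$-color it with $A,B$. Equivalently, we seek $C\cup D$ meeting every odd cycle of $G$, with $C$ and $D$ each $2$-independent. One route is to start from a maximal $2$-independent set $C$, observe that $G-C$ has maximum degree structure forcing its odd cycles to be long, and pick $D$ greedily on them using high girth so that distance-$2$ conflicts stay local.

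The main obstacle is the extension step for tightly packed local structures, especially $5$-cycles, which are Petersen-like. There, deleting a small subgraph can produce the Petersen graph or leave a vertex blocked after every swap. I would attack this by reducing on a $5$-cycle $v_1\cdots v_5$ with pendant neighbors $u_i$. We color $G-\{v_1,\dots,v_5\}$, then use the freedom to precolor the $u_i$ through swaps, and do a finite case analysis over color patterns on $u_1,\dots,u_5$. Along the way we note that the only pattern that cannot extend forces the Petersen structure globally.
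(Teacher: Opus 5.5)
There is a genuine gap. The outline names the hard steps but carries out none of them.

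\textbf{Reduction to cubic graphs.} This part is fine if you simply cite Zein--Mortada: a connected subcubic graph that is not cubic is $2$-degenerate. Your ``direct'' extension at a vertex $v$ of degree $\le 2$ is not routine, however. You only treat the case where the neighbours of $v$ use both $A$ and $B$. You miss the case where two neighbours of $v$ lie in the same $2$-class: this is allowed when they are at distance $\ge 3$ in $G-v$, but they are at distance $2$ in $G$.

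\textbf{Deletion-based reductions in the cubic case.} The same defect affects every reduction you propose. A coloring of $G-S$ is generally not a valid partial coloring of $G$, because vertices far apart in $G-S$ can be at distance $2$ through $S$. You would need gadgets in the reduced graph to prevent this, and you specify none. You also establish none of the reducible configurations themselves.

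\textbf{The $5$-cycle step.} This is the most serious gap. You cannot reach girth $6$ or $7$ by ``excluding short cycles.'' The Petersen graph has girth $5$ and no such coloring, so any reduction of $5$-cycles must use the global hypothesis that $G$ is not the Petersen graph. Your sketch replaces this by the assertion that the only non-extendable pattern on $u_1,\dots,u_5$ ``forces the Petersen structure globally.'' A bad pattern around one $5$-cycle is local information and forces nothing about the rest of $G$. What you would actually have to prove is that the coloring of $G-\{v_1,\dots,v_5\}$ can always be modified to avoid such patterns, and that is the full difficulty of the theorem.

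\textbf{The global finish.} This step does not work as described either. You need $D$ to be $2$-independent in $G$ and to meet every odd cycle of $G-C$. Large girth makes these cycles long, but not disjoint or far apart, and many of them can overlap. A greedy choice gives no guarantee that a later odd cycle still contains a vertex at distance $\ge 3$ from all earlier choices.

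\textbf{How the paper avoids these problems.} The paper reverses the order of choice and uses no minimal counterexample. It takes disjoint independent sets $I_1,I_2$ with $|I_1|+|I_2|$ maximum. Among those, it minimizes the number of components of $G-I_1-I_2$, and then the number of cycle components of the distance-$\le 2$ graph $H$ on the leftover vertices. Maximality forces the leftover vertices to induce only $P_1$'s and $P_2$'s, and forces $H$ to have only tree and cycle components. The two $2$-classes then come from $2$-coloring $H$. Switching arguments reduce to a single odd cycle component and eliminate it unless $G$ is the Petersen graph.

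Your outline has nothing playing the role of this extremal choice. Without it, or without fully verified reducible configurations together with a correct global finish, the core of the proof is missing.
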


As a corollary of our result, Question~\ref{openquestion1} is answered in the affirmative and Conjecture~\ref{mainconj} is completely solved --- after being opened for a decade.  Question~\ref{openquestion2} is answered in the positive as well. Our result characterizes the Petersen graph as the unique connected subcubic graph whose vertex set cannot be partitioned into two independent sets and two $2$-independent sets (i.e., has no packing $(1,1,2,2)$-coloring).

%In this paper, we mainly prove the following theorem.  This implies that Conjecture \ref{mainconj} is true by Theorem \ref{subthm}. 

\section{Proof of Theorem \ref{mainthe}}
In what follows, we denote by $I_1$ and $I_2$ the two disjoint independent sets, and by $2_a$ and $2_b$ the colors assigned to the two disjoint $2$-independent sets in a possible packing (1,1,2,2)-coloring of $G$.
Our proof follows the framework of Liu, Zhang, and Zhang~\cite{LZZ1}. The main idea is how to choose and further adjust locally two appropriate independent sets so that the remaining vertices can be partitioned into two $2$-independent sets. We assume that $G$ is a connected cubic graph, since every subcubic graph is a subgraph of some larger cubic graph. We first pick two disjoint independent sets, $I_1$ and $I_2$, such that %Furthermore, we assume $G$ is connected.
\renewcommand{\theequation}{\Roman{equation}}
\begin{equation}\label{condition1}
    |I_1| + |I_2|\text{~is the maximum among all choices of $I_1,I_2$}. 
\end{equation}
By Condition~\eqref{condition1}, we observe that any vertex outside $I_1 \cup I_2$ is necessarily adjacent to at least one vertex in $I_1$ and one vertex in $I_2$. 
 Thus, each connected component in $G' = G[V(G)-I_1-I_2]$ is either a single vertex $P_1$ or a path $P_2$ on two vertices as $G$ is cubic.
Among the sets $I_1, I_2$ satisfying Condition~\eqref{condition1}, we further require that they satisfy

\begin{equation}\label{condition2}
    \text{the number of connected components in $G-I_1-I_2$ is minimum.} 
\end{equation}

The graph $H_{I_1,I_2}$ (abbreviated to $H$ if $I_1,I_2$ is clear from the context) is defined to be the graph $H$ with $V(H) = V(G) - I_1 - I_2$ and $E(H) = \{u_1u_2 \text{ }|\text{ }d(u_1,u_2) \le 2, u_1,u_2 \in V(H)\}$.  To better illustrate how each component of $H$ fits into $G$,   let the corresponding graph $G(H)$ of $H$ in $G$ be defined as the graph with vertex set $V(H) \cup \{u \text{ }|\text{ } u \in I_1\cup I_2, v_1, v_2 \in V(H), \text{ and }uv_1, uv_2 \in E(G)\}$ and the edge set $E(G(H))=\{u_1u_2 \text{ }|\text{ } u_1u_2 \in E(G), u_1, u_2 \in V(H)\} \cup \{uv\text{ }|\text{ }uv \in E(G), u \in V(H), v \in V(G(H)) - V(H) \}$. We refer to the vertices in $I_1 \cup I_2$ as black vertices and to the vertices in $V(G) - I_1 - I_2$ as red vertices. Let $u \in I_1 \cup I_2$, say $u \in I_1$, and $v \in V(G) - I_1 - I_2$. A {\em switch} of $u$ and $v$ is defined as the operation of deleting $u$ from $I_1$ and adding $v$ to $I_1$. The following lemma collects lemmas in~\cite{LZZ1} that will be used in our proof.

\begin{lem}\label{collectlemma}
Subject to Conditions \eqref{condition1} and \eqref{condition2}, we have the following statements.

(1) At most one red $P_2$ can be included in a connected component of $G(H)$. 

(2) If three red vertices $u,v,w$ are adjacent to the same vertex $x \in I_1 \cup I_2$, then each vertex $y \in V(G)-\{u,v,w\}$ with $\min\{d(y,u), d(y,v), d(y,w)\} \le 2$ must be in $I_1 \cup I_2$. Furthermore, $u,v,w$ form a triangle component in $H$.

(3) Each component of $H$ is either a tree, an even cycle, or an odd cycle.
\end{lem}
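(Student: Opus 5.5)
Throughout, we use Conditions~\eqref{condition1} and~\eqref{condition2} and argue by contradiction. Each part is reduced to an exchange (switch) argument: assuming the conclusion fails, we build a new pair $I_1',I_2'$ that either has $|I_1'|+|I_2'|>|I_1|+|I_2|$, contradicting~\eqref{condition1}, or has the same total size but fewer components of $G-I_1'-I_2'$, contradicting~\eqref{condition2}. Two basic facts are used repeatedly. First, every red vertex has a neighbour in $I_1$ and a neighbour in $I_2$. Second, since $G$ is cubic, a red vertex in a red $P_2$ has exactly one neighbour in each of $I_1$ and $I_2$, while an isolated red vertex has three black neighbours, so it has two neighbours in one set and one in the other.

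For (2), let $u,v,w$ be red with common neighbour $x\in I_1$. We switch $x$ with one of them, say $u$: put $u$ into $I_1$ and remove $x$. This is legal because $u$'s other neighbours are not in $I_1$, and only $x$ was blocking $u$. Now $x$ is red, and $v,w$ lose their $I_1$-neighbour unless they have another one.
\begin{itemize}
\item If $v$ (or $w$) has no other neighbour in $I_1$, we add $v$ to $I_1$ as well (it is not adjacent to $u$, since both are in the red structure around $x$), so $|I_1|$ increases.
\item Otherwise we analyse components. The components $\{u\},\{v\},\{w\}$ change, and $x$ joins some red component.
\end{itemize}
A case check on whether $u,v,w$ are isolated red vertices or lie in red $P_2$'s, and on whether a vertex $y$ at distance at most $2$ from them is red, shows that any red $y$ within distance $2$ gives either an enlargement or a merging of red components. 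Either outcome contradicts the conditions. Hence all such $y$ are black. Consequently $u,v,w$ are pairwise at distance $2$ through $x$ and have no other red vertices within distance $2$, so they form a triangle component of $H$.

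For (1), suppose a component of $G(H)$ contains two red $P_2$'s, $ab$ and $cd$, joined by a shortest path in $H$. Each step of this path is an edge of $G$ or passes through a common black neighbour. We pick the closest such pair and shift along the path. Switching the black vertex $x$ adjacent to both $b$ and the next red vertex means putting $b$ into $x$'s set; this converts $ab$ into a single red vertex and moves redness to $x$. Repeating this along the path, we either merge two components, so the count drops and~\eqref{condition2} is contradicted, or we create a red vertex with both neighbours in one class, which can then be recoloured to grow an independent set, contradicting~\eqref{condition1}.

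For (3), every vertex of $H$ has degree at most $3$ in $H$ once the configurations excluded by (1) and (2) are removed. The key step is to show that a vertex of degree at least $3$ in $H$ only arises in tree-like situations. Any cycle of $H$ must then be an entire component, that is, a cycle with no pendant branches, and otherwise the component is a tree. To prove this we take a cycle $C$ of $H$ with an extra edge and perform the switches along $C$ as in (1). This yields a contradiction with~\eqref{condition2}.

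The main obstacle is (3). It needs careful bookkeeping of which black vertices the switches pass through, to ensure the switched sets remain independent. I would handle it by taking a minimal counterexample cycle and treating $P_1$-steps and $P_2$-steps separately.
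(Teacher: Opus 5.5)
Your plan for (1) is sound: take two red $P_2$'s closest in $H$ and push one along a shortest $H$-path by exchanging its end with the black connector. The paper gives no proof of this lemma; it quotes it from Liu, Zhang, and Zhang~\cite{LZZ1}, and the switch facts it needs reappear as $F_1,F_2$ in Observation~\ref{inadm}.

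\textbf{Part (2).} Your opening switch is illegal. All three neighbours of $x$ are red, so $x$ may sit in either class. If $u$ had no neighbour in $I_1$ other than $x$, moving $x$ to $I_2$ and adding $u$ to $I_1$ would contradict \eqref{condition1}. Hence each of $u,v,w$ has, besides $x$, exactly one neighbour $a_u\in I_1$ and one $b_u\in I_2$. So your claim that ``only $x$ blocks $u$'' fails in every case that survives, and ``remove $x$, add $u$ to $I_1$'' is not allowed. The switches that work are these: with $x$ placed in $I_2$, exchange $u$ and $a_u$; with $x\in I_1$, exchange $u$ and $b_u$. Each keeps $|I_1|+|I_2|$ and destroys the component $\{u\}$. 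So a red neighbour of $a_u$ or $b_u$ other than $u$ would lower the number of components, contradicting \eqref{condition2}. That is the whole content of (2), and your unperformed ``case check'' does not supply it.

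\textbf{Part (3).} You give no argument here. The bound $\Delta(H)\le 3$ does follow from (2). But shifting a red $P_2$ along a cycle of $H$ leaves the number of components unchanged, so ``switches along $C$'' cannot by themselves contradict \eqref{condition2}. The contradiction comes from tracking which class the connecting black vertices lie in, using two facts you never state.
\begin{itemize}
\item[($F_1$)] If a red $P_1$ $w$ has a unique neighbour $r$ in one class, exchanging $w$ and $r$ shows that $r$ has no other red neighbour. Hence a red $P_1$ outside a $T_1$ has $H$-degree at most $2$ and reaches its $H$-neighbours only through its two same-class neighbours. So along any $H$-path of red $P_1$'s, all connecting black vertices lie in one class.
\item[($F_2$)] Let $uv$ be a red $P_2$ with $N(u)=\{v,a,b\}$ and $N(v)=\{u,a',b'\}$, where $a,a'\in I_1$ and $b,b'\in I_2$. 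Suppose $a$ has a red neighbour outside $\{u,v\}$. Then exchanging $u$ and $a$ preserves \eqref{condition1} and \eqref{condition2}, and makes $v$ a red $P_1$ whose only $I_2$-neighbour is $b'$. By ($F_1$), $b'$ has no red neighbour outside $\{u,v\}$. Symmetrically for $b$ and $a'$.
\end{itemize}
Now suppose a component of $H$ contains a cycle $C$ but is not $C$. Then some vertex of $C$ has $H$-degree $3$. By the facts above and (1), it is an end $u$ of the component's unique red $P_2$ $uv$, with both $a$ and $b$ leading to further red $P_1$'s. Then ($F_2$) makes $v$ a leaf of $H$. So $C$ must leave $u$ through $a\in I_1$ and return through $b\in I_2$ along red $P_1$'s, which contradicts the one-class property.
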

For a cycle component with order 3 in $H$, we have the following three structures in $G$. In fact, since $G$ is cubic, every cycle component of $H$ corresponds to either a $T_1$ or a cycle in $G$ by Lemma \ref{collectlemma} (2).

\begin{figure}[htbp]
\vspace{-5mm}
    \centering
    \subfloat[$T_1.$]{
    \begin{tikzpicture}
    \path (-1.2,0){}
    (1.2,0) {};
        \scoped[every node/.style = {circle, draw,fill, inner sep = 1.6pt}] 
        \draw (0,0) node (x) {}
        +(90:1) node[red] (u) {}
        +(210:1) node[red] (v) {}
        +(-30:1) node[red] (w) {}
        ;
        \draw (x) -- (u)
        (x) -- (v)
        (x) -- (w);
    \end{tikzpicture}
    }\qquad\qquad\qquad
\subfloat[$T_2$.]{
\begin{tikzpicture}
    \scoped[every node/.style = {circle, draw,fill, inner sep = 1.6pt}] 
        \draw (0,0) node[red] (u1) {}
        ++(0:1) node[red] (u2) {}
        ++(72:1) node (u3) {}
        ++(144:1) node[red] (u4) {}
        ++(216:1) node (u5) {}
        ;
        \draw (u2) -- (u3) -- (u4) -- (u5) -- (u1) ;
        \draw[red,thick] (u1) -- (u2);
\end{tikzpicture}
}\qquad\qquad\qquad
\subfloat[$T_3$.]{
\begin{tikzpicture}
    \scoped[every node/.style = {circle, draw,fill, inner sep = 1.6pt}] 
        \draw (0,0) node (u1) {}
        ++(60:1) node[red] (u2) {}
        ++(120:1) node (u3) {}
        ++(180:1) node[red] (u4) {}
        ++(240:1) node (u5) {}
        ++(300:1) node[red] (u6) {}
        ;
        \draw (u1) -- (u2) -- (u3) -- (u4) -- (u5) -- (u6) -- (u1);
\end{tikzpicture}
}
  % \subfloat[$F_1.$]{\includegraphics[width = 3.5cm]{F1.pdf}}
  % \subfloat[$F_2.$]{\includegraphics[height = 2.3cm]{F2.pdf}} 
\caption{The structures in $G$ corresponding to the cycle components of order 3 in $H$. }
\vspace{-3mm}
\end{figure}
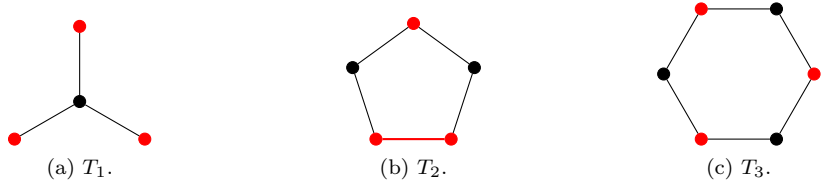

The basic forbidden configurations $F_1$ and $F_2$ shown in Figure~\ref{configurations} play a key role in our proof. By performing some switch operations, we can readily establish the following observation. 

\begin{figure}[ht]
\begin{center}
  \subfloat[$F_1.$]{\includegraphics[width = 3.5cm]{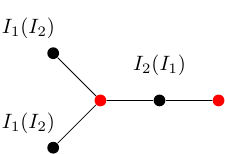}} \hspace{1.in}
  \subfloat[$F_2.$]{\includegraphics[height = 2.3cm]{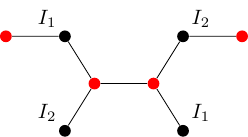}} 
\caption{Configurations $F_1$ and $F_2$.}\label{configurations}
\end{center}
\vspace{-8mm}
\end{figure}

\begin{obs}\label{inadm}
Subject to Conditions~\eqref{condition1} and~\eqref{condition2}, the configurations $F_1$ and $F_2$ cannot occur in $G$.  
\end{obs}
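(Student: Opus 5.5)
The figures for $F_1$ and $F_2$ are not reproduced in the text, so I have to guess their shape. Given the surrounding discussion, I take them to be small local patterns of red and black vertices: a red $P_2$ or a red $P_1$ close to a black vertex whose other neighbours are red, as in $T_1$--$T_3$, with a particular $I_1/I_2$ labelling. The plan is by contradiction. Suppose $I_1,I_2$ satisfy Conditions~\eqref{condition1} and~\eqref{condition2} and $G$ contains $F_1$ or $F_2$. I will exhibit a short sequence of switches producing new disjoint independent sets $I_1',I_2'$. Either $|I_1'|+|I_2'|>|I_1|+|I_2|$, contradicting \eqref{condition1}, or the sizes are equal but $G-I_1'-I_2'$ has fewer components, contradicting \eqref{condition2}.

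\emph{Step 1: switchable vertices.} By the remark after \eqref{condition1}, every red vertex $v$ has at least one neighbour in $I_1$ and one in $I_2$. Since $G$ is cubic, at most three of its neighbours are black. So if $v$ is a red $P_1$, one of $I_1,I_2$, say $I_1$, contains exactly one neighbour $u$ of $v$. Switching $u$ and $v$ keeps $I_1$ independent and keeps $|I_1|+|I_2|$ unchanged. The same holds when $v$ lies on a red $P_2$: then $v$ has exactly one neighbour in $I_1$ and one in $I_2$, and it can be switched with either.

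\emph{Step 2: apply the switch inside each configuration.} In $F_1$ (resp.\ $F_2$) I locate the black vertex $u$ whose switch with an adjacent red vertex $v$ makes $u$ red. The configuration should be exactly what forces this. Either $u$ now has no neighbour in $I_1$, so it can be added to $I_1$ and the total size grows by one, contradicting \eqref{condition1}. Or the new red vertex $u$ merges two previously separate red components, for instance a red $P_1$ and the vertex left over from a red $P_2$, into one red $P_2$. In that case the number of components of $G-I_1-I_2$ drops, contradicting \eqref{condition2}. If one switch is not enough, I would do two successive switches along a path black--red--black. After each switch I check independence, using only the edges shown in the configuration and the cubic degree bound.

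\emph{Main obstacle.} Without the drawings, the hard part is the bookkeeping. After the switch, the vertex $v$ that joined $I_1$ must have no other neighbour in $I_1$, and the vertex $u$ that left $I_1$ must lie in neither set or be placeable. The difficulty is that vertices outside the configuration may be adjacent to $u$ or $v$. I would handle this by case analysis on which of $I_1,I_2$ contains the unseen third neighbours, and use Lemma~\ref{collectlemma}(1) and (2) to rule out the degenerate cases.
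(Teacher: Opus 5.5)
Your outline never identifies $F_1$ and $F_2$. So the step that actually constitutes the proof is never carried out: naming the switch and checking that it breaks \eqref{condition1} or \eqref{condition2}. Writing ``the configuration should be exactly what forces this'' restates the observation rather than proving it.

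The first mechanism in your Step~2 also cannot occur. After $u$ leaves $I_1$ and its neighbour $v$ enters $I_1$, $u$ has the neighbour $v\in I_1$, so $u$ can never be added back to $I_1$. The second mechanism (``merging two red components into a red $P_2$'') is not the one that operates either.

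The configurations can be inferred from how the paper invokes them (Lemma~\ref{mainlemma}(1), Lemma~\ref{nosameneighbour}(1), Lemma~\ref{samein}), since the figures are not in the text. Up to swapping $I_1$ and $I_2$:
\begin{itemize}
\item $F_1$ is a red $P_1$ $u$ with two neighbours in $I_1$, whose unique $I_2$-neighbour $c$ has another red neighbour $u'\neq u$.
\item $F_2$ is a red $P_2$ $xy$ in which the $I_1$-neighbour $a$ of $x$ and the $I_2$-neighbour $b$ of $y$ each have a red neighbour outside $\{x,y\}$.
\end{itemize}

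The missing idea is that a whole red component is made to vanish, while the freed black vertices create no new component.
\begin{itemize}
\item In $F_1$, switch $u$ and $c$. The set $I_2-c+u$ is independent because $c$ was the only $I_2$-neighbour of $u$ (your Step~1). The total $|I_1|+|I_2|$ is unchanged, so the new pair again satisfies \eqref{condition1}. The component $\{u\}$ disappears and $c$ joins the component of $u'$. Hence the number of components of $G-I_1-I_2$ drops by one, contradicting \eqref{condition2}.
\item In $F_2$, replace $I_1,I_2$ by $I_1-a+x$ and $I_2-b+y$. Both are independent, since $x$ and $y$ each have exactly one neighbour in the relevant set and they go into different sets. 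The component $\{x,y\}$ disappears while $a$ and $b$ attach to existing components through their other red neighbours, so again the count drops.
\end{itemize}

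Your ``main obstacle'' disappears as well. Extra adjacencies of $a$, $b$, $c$ to red vertices can only merge further components, which lowers the count even more, so no case analysis on unseen neighbours is needed.
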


For cycles $C_1, C_2$ in $G$ and a vertex $u \in V(G)$,  define $d(u,C_1) = \min_{v \in V(C_1)} d(u,v)$ as the distance from $u$ to $C_1$, and $d(C_1, C_2) = \min_{u \in V(C_1), v\in V(C_2)} d(u,v)$ as the distance from $C_1$ to $C_2$.

\begin{lem}\label{mainlemma}
Let $C = u_1u_2 \ldots u_k u_1$ be a cycle component in $H$ and its corresponding graph in $G$ is not a $T_1$. Suppose its corresponding cycle $C'$ in $G$ is $u_1v_1u_2v_2 \ldots u_kv_ku_1$ (when there is no red $P_2$) or $u_1v_1u_2v_2 \ldots u_ku_1$ (when $u_1u_k$ is the unique red $P_2$). The following statements hold.
%$G(C)$ be its corresponding graph in $G(H)$.

(1) The vertices $\{v_1, \ldots, v_k\}$ must be entirely contained in either $I_1$ or $I_2$.
    
(2) $C'$ has no chord in $G$.

(3) Let $x \notin V(C')$ be a vertex in $G$. If $d(x,C') \le 2$, then $x \in I_1 \cup I_2$. Suppose $v_1, \ldots, v_k \in I_1$. If $d(x,C') = 1$, then $x \in I_2$; if $d(x,C') = 2$, then $x \in I_1$. 
%if $d(x,C')=d(x,y)=2$ for some $y \in \{v_1, \ldots, v_k\}$, then $x \in I_1$. % except $x \in \{u_1, \ldots, u_k\}$.

%   (1)If $C_0^*$ is a cycle, then the outer-neighbour of a red isolated and other red vertices can not be the same. 
   
  % (2)The consecutive vertices can not have the same outer-neighbour. 
\end{lem}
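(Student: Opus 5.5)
The plan is to argue entirely by local switches, using maximality of $|I_1|+|I_2|$ (Condition~\eqref{condition1}) and minimality of the number of red components (Condition~\eqref{condition2}), together with Lemma~\ref{collectlemma} and Observation~\ref{inadm}.

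First we fix the structure. Each $v_i$ is black and adjacent to two red vertices $u_i,u_{i+1}$. By the remark after Condition~\eqref{condition1}, each red vertex has a neighbour in $I_1$ and a neighbour in $I_2$. Since $G$ is cubic, an isolated red $u_i$ has exactly three black neighbours. On $C'$ these are $v_{i-1},v_i$ and one further neighbour $w_i$, which we call its outer neighbour. For the red $P_2$ $u_1u_k$, each endpoint has exactly two black neighbours: one is on $C'$ and one is outside.

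\textbf{Part (1).} Suppose $v_{i-1}\in I_1$ and $v_i\in I_2$ for some $i$; we look for a contradiction. Then $u_i$ already sees both classes through cycle vertices. We try a switch: move $v_i$ out of $I_2$ and $u_{i+1}$ into $I_2$, or symmetrically at the other end. The intended consequence is that the new red vertex $v_i$ either merges with an existing red component, reducing the component count against Condition~\eqref{condition2}, or becomes addable to some class, contradicting Condition~\eqref{condition1}. Going around the cycle, a class change must occur an even number of times, so we may pick a change point adjacent to a favourable neighbourhood. Where the switch fails, it should produce exactly $F_1$ or $F_2$, which Observation~\ref{inadm} rules out.

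\textbf{Part (2).} A chord of $C'$ joins two vertices of $C'$ by an edge of $G$. There are two cases.
\begin{enumerate}[(a)]
\item The chord joins two black $v$'s. Each $v_i$ already has two red neighbours, so it has one free edge. By (1) both endpoints lie in $I_1$, which is impossible since $I_1$ is independent.
\item The chord touches a red vertex. Then it gives two red vertices at distance at most $2$ that are nonadjacent in $C$, i.e.\ an extra edge in $H$. This contradicts $C$ being a cycle component of $H$ (Lemma~\ref{collectlemma}(3)), or else it creates a second red $P_2$, against Lemma~\ref{collectlemma}(1).
\end{enumerate}

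\textbf{Part (3).} Let $x\notin C'$. If $x$ is red and within distance $2$ of some $u_i$, then $x$ would be adjacent to $u_i$ in $H$, and $C$ would not be a whole component. If $x$ is red and within distance $2$ only of the $v$'s, say $x\sim y\sim v_i$, we switch $v_i$ with $u_i$ (the same class-$I_1$ swap used in (1)) so that $x$ joins the component, and again derive a contradiction. For the colour claims:
\begin{itemize}
\item If $d(x,C')=1$, then either $x$ is the outer neighbour of a $u_i$, or $x$ is adjacent to a $v_i$. In the first case, $u_i$ needs an $I_2$ neighbour, and its $C'$-neighbours are all in $I_1$. In the second case, $x\in I_1$ would contradict independence.
\item If $d(x,C')=2$, suppose $x\in I_2$, say $x\sim w\sim u_i$ with $w\in I_2$. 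We switch $u_i$ into $I_1$ after removing $v_{i-1}$ (or rotate along the cycle) so as to increase $|I_1|+|I_2|$ or reduce the component count.
\end{itemize}

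The main obstacle is making these switch arguments in (1) and the last clause of (3) airtight. Rotating a cycle of red/black alternations (moving every $u_j$ into $I_1$ and every $v_j$ out) keeps sizes equal. The contradiction must then come from the components changing, so the distance-$2$ vertex in $I_2$ has to be shown to allow merging. I would first try the full rotation $I_1' = I_1-\{v_j\}+\{u_j\}$. I would then check where $F_1$ and $F_2$ appear when it fails.
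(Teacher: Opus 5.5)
Your outline names the right tools, but the steps that carry the proof are either missing or would fail as written.

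\textbf{Part (1).} The switch you propose ($v_i$ out of $I_2$, $u_{i+1}$ in) is not legal in general, since $v_{i+1}$ or the outer neighbour of $u_{i+1}$ may lie in $I_2$. The parity/``favourable change point'' idea does not repair this. The missing observation is local to the vertex at the change point. There $u_i$ is a red $P_1$ with neighbours $v_{i-1}\in I_1$, $v_i\in I_2$ and a black third neighbour $w_i$.
If $w_i\in I_1$, then $v_i$ is the only $I_2$-neighbour of $u_i$, and $v_i$ has a second red neighbour $u_{i+1}$. If $w_i\in I_2$, the same holds for $v_{i-1}$ and $u_{i-1}$. Either way this is $F_1$: switching $u_i$ with that neighbour preserves $|I_1|+|I_2|$ and lowers the number of red components. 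So any single change point already gives the contradiction.

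\textbf{Part (2)(b).} The claim that a chord at a red vertex produces a new edge of $H$ fails for $k=3$, i.e.\ exactly for $T_2$ and $T_3$. In $u_1v_1u_2v_2u_3v_3u_1$, a chord $u_1v_2$ only creates short distances among $u_1,u_2,u_3$, which are already pairwise adjacent in the triangle $C$, and no red $P_2$ appears.
The argument that works for every $k$ uses Condition~\eqref{condition1}. The two $C'$-neighbours of $u_i$ are in $I_1$ or red, so a chord at $u_i$ (to some $v_j\in I_1$ or to a red $u_j$) leaves $u_i$ with no neighbour in $I_2$. Then $u_i$ could be added to $I_2$.

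\textbf{Part (3).} The swap ``$v_i$ out of $I_1$, $u_i$ in'' breaks independence, because $u_i$ is also adjacent to $v_{i-1}\in I_1$. You need the whole rotation. When $u_1u_k$ is the red $P_2$, it must be $I_1-\{v_1,\dots,v_{k-1}\}+\{u_2,\dots,u_k\}$; your $I_1-\{v_j\}+\{u_j\}$ would contain the adjacent pair $u_1,u_k$. The rotation's validity rests on (2) and on the distance-$1$ part of (3), so that part must be proved first.

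After the rotation, a red $x$ with $x\sim y\sim v_i$ makes $y$ the unique $I_2$-neighbour of the red $P_1$ $v_i$, and $y$ has another red neighbour $x$. This is again $F_1$.

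The last clause needs no switch at all. Once $x$ is known to be black and its neighbour $w$ with $d(w,C')=1$ is known to lie in $I_2$, independence of $I_2$ forces $x\in I_1$. The setup you wrote (``$x\in I_2$, $w\in I_2$, $x\sim w$'') is already the contradiction. The switch you propose there ($u_i$ into $I_1$ after removing only $v_{i-1}$) is illegal anyway, since $v_i\in I_1$ as well.
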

\begin{proof}
(1) Suppose $v_1 \in I_1$ and $v_2 \in I_2$. Let $N(u_2) = \{v_1, v_2, w_2\}$. Without loss of generality, we may assume $w_2 \in I_1$. This creates a configuration $F_1$ in $G$, yielding a contradiction. Hence, $v_1 \in I_1$ implies $v_2 \in I_1$. By a similar argument, we can conclude inductively that $v_1, \ldots, v_k \in I_1$.

(2) By (1), we may assume $\{v_1, \ldots, v_k\} \subseteq I_1$. This implies  $v_i v_j\notin E(G)$ for $i \neq j$, and also forbids any chord $u_i v_j$ otherwise adding $u_i$ to $I_2$ would violate Condition~\eqref{condition1}. Likewise, an edge $u_i u_j$ (excluding $u_1 u_k$ when $C' = u_1 v_1 u_2 v_2 \ldots u_k u_1$) is prohibited, as it would similarly allow the addition of $u_i$ to $I_2$, contradicting Condition~\eqref{condition1}.

(3) Let $x \in V(G)$ and $y \in V(C')$. By Lemma~\ref{collectlemma}, if $d(x, C') = d(x,y) = 1$, then $x \in I_1 \cup I_2$. By (1), if $y \in \{v_1, \ldots, v_k\}$, then $x \in I_2$; otherwise, $y \in \{u_1, \ldots, u_k\}$, then $x \in I_2$ since otherwise we can add $x$ to $I_2$ and it contradicts Condition \eqref{condition1}. Moreover, if $ d(x, C') = d(x,y) = 2$ and $y \in \{u_1,\dotsc,u_k\}$, then $x \in I_1$  otherwise we have an forbidden configuration $F_1$ or $F_2$. For   $y \in \{v_1,\dotsc,v_k\}$, if there is no red $P_2$, we construct a new independent set $I_1$ by rearranging the vertices as $I_1\setminus\{v_1,\dotsc,v_k\}\cup \{u_1,\dotsc,u_k\}$. Otherwise, if a red $P_2$ exists, we instead form $I_1$ by taking $I_1\setminus\{v_1,\dotsc,v_{k - 1}\}\cup \{u_2,\dotsc,u_{k}\}$. Using a similar argument, we  then conclude that $x\in I_1$.
\end{proof}

%\noindent\textbf{Remark}: 

\begin{remark}\label{remark}
In the proof of Lemma~\ref{mainlemma} (3), we used a series of switch operations in cycle $C'$. We conclude it here. Suppose the setup of cycle $C$ and $C'$ is the same as Lemma~\ref{mainlemma}. In the case there is no red $P_2$, we can delete $v_1, \ldots, v_k$ from $I_1$ and add $u_1, \ldots, u_k$ to $I_1$. In this way, we formed a new $I_1$ satisfying Conditions~\eqref{condition1} and~\eqref{condition2}. In the case there is a red $P_2$, for each edge $e = xy \in E(C')$, we can perform a series of switch operations so that $e$ is the red $P_2$.
%It should be noted that we can always rearrange the vertices of $I_1$
%in the cycle $C'$ (maximum independent set) so that conditions \eqref{condition1} and \eqref{condition2} continue to hold, with all vertices at distance 1 belonging to $I_2$
%and all black vertices in this cycle belonging to $I_1$. And this preserves the cycle component of $H$.    
\end{remark}

Let $C$ be a cycle component in $H$. We have the following properties about switches.

    (1) If $C$ corresponds to a $T_1$ in $G$, let $V(C) = \{u,v,w\}$ and let $x$ be the vertex in $I_1 \cup I_2$ adjacent to each of $u, v$, and $w$. Now suppose $u'$ is a neighbour of $u$ other than $x$ with $u' \in I_i$ for some $i \in {1,2}$.  Then, by placing $x$ in $I_{3-i}$ and switching $u$ and $u'$, Conditions \eqref{condition1} and \eqref{condition2} are preserved; for otherwise, the original assumption would be violated.

    (2) If $C$ corresponds to a cycle $C'$ in $G$, then for every red $P_1 = u$ in $C'$ with a neighbour $u'$ outside $C'$, switching $u$ and $u'$ preserves Conditions \eqref{condition1} and \eqref{condition2}.

\vspace{1em}

By Lemma~\ref{collectlemma}, every component of $H$ is either a path or a cycle. Paths and even cycles are bipartite and therefore can be colored with $2_a$ and $2_b$. Therefore, it remains to consider the odd cycle components of $H$. Let $N(H)$ be the number of cycle components in $H$. Subject to Conditions \eqref{condition1} and \eqref{condition2}, we further require
\begin{equation}\label{condition3}
\text{$N(H)$ is minimized among all choices of $I_1, I_2$.} 
\end{equation}

We will show that there is at most one cycle component in $H$. 

\begin{lem}
    $N(H)\le 1$. 
\end{lem}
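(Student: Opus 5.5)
We argue by contradiction. Suppose that, under Conditions~\eqref{condition1}--\eqref{condition3}, $H$ has two distinct cycle components $C_1$ and $C_2$, with corresponding structures $C_1'$ and $C_2'$ in $G$ (each a $T_1$ or a cycle as in Lemma~\ref{mainlemma}). The goal is to find a sequence of switches that preserves Conditions~\eqref{condition1} and~\eqref{condition2} but merges the two cycle components, or destroys one of them, so that $N(H)$ drops. That would contradict Condition~\eqref{condition3}.

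\textbf{Step 1: rigid neighbourhoods.} First we use Lemma~\ref{collectlemma}(2) and Lemma~\ref{mainlemma}(3) to describe the neighbourhood of each $C_i'$ exactly. Take $C_1'$ to be a cycle with black vertices in $I_1$. Every vertex at distance $1$ from $C_1'$ is in $I_2$, and every vertex at distance $2$ is in $I_1$. For a $T_1$, every vertex within distance $2$ of its red vertices is black. In particular, $d(C_1',C_2')\ge 3$, since red vertices of $C_2'$ cannot lie within distance $2$ of $C_1'$.

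\textbf{Step 2: pushing the parities along a path.} Take a shortest path $P$ in $G$ from $C_1'$ to $C_2'$; it exists because $G$ is connected. By Remark~\ref{remark}, we may choose which black class occupies $C_1'$: swapping $v_i$ with $u_i$ turns the cycle's old red vertices black. We may also choose where the red $P_2$ sits, and we may relocate a $T_1$ using switch property (1). Lemma~\ref{mainlemma}(3) forces the colours along $P$ to alternate between $I_2$ and $I_1$ for two steps out of $C_1'$, and symmetrically for two steps out of $C_2'$.

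\textbf{Step 3: reducing to short distances.} I would first treat $d(C_1',C_2')\in\{3,4\}$. In that range the forced patterns from both ends overlap on $P$. Either they conflict outright, or, after rearranging one cycle via Remark~\ref{remark}, the vertex of $P$ next to $C_2'$ becomes red or becomes addable to an independent set. The latter contradicts Condition~\eqref{condition1}. The former creates a red vertex adjacent to both structures, so $C_1$ and $C_2$ fuse into one component of $H$, or some component becomes a path, and $N(H)$ drops. For larger distances, I would slide a switch along $P$: switch a red vertex $u$ of $C_1'$ with its outside neighbour $u'$, which is allowed by switch property (2) or (1). The new red vertex $u'$ now sits on $P$, and the old cycle component is destroyed and becomes a tree, because $u$ is black. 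The new structure is either acyclic, so $N(H)$ decreases, or it is a new cycle component that is strictly closer to $C_2'$. Iterating with a potential function such as $(N(H), d(C_1',C_2'))$ in lexicographic order reduces everything to the short-distance case.

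\textbf{Main obstacle.} The hardest step is the sliding argument: a switch might create a new cycle component elsewhere, or several of them, instead of moving the old one closer. I would control this with Lemma~\ref{collectlemma}(1) (at most one red $P_2$ per component) and with the forbidden configurations $F_1$ and $F_2$ from Observation~\ref{inadm}. The aim is to show that the only new odd structure is the one around $u'$, and that it is a $T_1$ or a cycle lying on $P$. Mixed cases, such as a $T_1$ against a cycle, or a red $P_2$ located near $P$, will need separate short case checks.
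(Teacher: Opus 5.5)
Your sliding step is the paper's argument. You make the end $u$ of a shortest path on $C_1'$ a red $P_1$ via Remark~\ref{remark}, or use switch property (1) for a $T_1$. You switch $u$ with its neighbour $u'$ on the path, and Condition~\eqref{condition3} then forces a new cycle component through $u'$, which is closer to $C_2'$. The paper phrases your lexicographic descent as an extremal choice: it minimises $f(H)$ subject to \eqref{condition1}--\eqref{condition3}.

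Your \emph{main obstacle} needs no $F_1$/$F_2$ case analysis. The vertex $u'$ is the only newly red vertex, so every component of the new $H$ that avoids $u'$ is a component of $H-u$. Hence the only possible new cycle component is the one through $u'$. When $d(C_1',C_2')\ge 4$, the vertex $u'$ is at distance at least $3$ from $C_2'$, so $C_2$ survives unchanged. You never need the shape of the new cycle.

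What does fail is your parity-based treatment of $d(C_1',C_2')\in\{3,4\}$. Take distance $4$, with shortest path $a_0a_1a_2a_3a_4$ and the black vertices of both $C_1'$ and $C_2'$ in $I_1$. Then Lemma~\ref{mainlemma}(3) gives $a_1,a_3\in I_2$ and $a_2\in I_1$ from both ends, so nothing conflicts. The rearrangements of Remark~\ref{remark} recolour only vertices of $C_i'$, so no path vertex becomes red or addable. A $T_1$ forces no parity at all.

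Simply drop this part. The slide already handles every $d\ge 4$, and only $d=3$ needs a direct argument. There, make $a_0$ a red $P_1$ and $a_3$ red, and switch $a_0$ with $a_1$. Then $a_1$ is $H$-adjacent to $a_3$, which already has two $H$-neighbours in $C_2$. So the component through $a_1$ is not a cycle, and by the observation above it was the only candidate for a new cycle component. Since $C_1$ and $C_2$ are both lost, $N(H)$ drops, contradicting Condition~\eqref{condition3}.
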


\begin{proof}
 Suppose $N(H)\ge 2$. Let $C_1, C_2$ be two cycle components in $H$. Let \[f(C_1,C_2) := \min_{u\in V(C_1), v\in V(C_2)}d_G(u,v).\]
By the definition of $H$, $f(C_1,C_2)\ge 3$. We further define $f(H)$ to be the minimum of $f(C_1, C_2)$, where $C_1, C_2$ are two cycle components in $H$.  Subject to Conditions~\eqref{condition1},~\eqref{condition2}, and~\eqref{condition3}, we require $f(H)$ is minimized among all choices of $I_1, I_2$. Let $C_1, C_2$ be two cycle components in $H$ such that $f(C_1, C_2) = f(H)$, $C_i'$ be the corresponding structure in $G$ ($T_1$ or a cycle), $i = 1,2$. Let $u_i\in V(C_i'), i = 1,2$, such that $d(C_1',C_2') = d(u_1,u_2)$. By Remark~\ref{remark}, we can perform a series of switch operations so that $u_1, u_2 \in G-I_1-I_2$ if they are not already. Therefore, we may assume $f(C_1,C_2) = d(C_1',C_2') = d(u_1,u_2)$. Let $u_1v_1\dotsm v_ku_2(k\ge 2)$ be the shortest path between $u_1$ and $u_2$. 
By Remark \ref{remark}, we may assume that $u_1$ is a  red $P_1$. Moreover, we assume that $v_1$ belongs to $I_2$. In the case where $C_1$ is a $T_1$, we can place the black vertex in $T_1$  in $I_1$.  We then switch  $u_1$ and $v_1$, and $V(C_1) \setminus \{u_1\}$ cannot form a cycle component of $H$. Under Condition \eqref{condition3},  in order to preserve the minimality of the number of cycle components in $H$, $v_1$ must participate in forming a new cycle component in $H$.  However, this would  reduce $f(H)$ by at least one,  contradicting the minimality of $f(H)$ with Conditions \eqref{condition1},\eqref{condition2}, and \eqref{condition3}. Therefore, we conclude that $N(H)\le 1$. 
%We can further assume $u_i\in V(C_i),i = 1,2$. Suppose not, say $u_1 \notin V(C_1)$ and the the distance $d(C_1', C_2')$ is realized by the path $u_1x_1 \ldots x_ku_2$. 
% Otherwise,  $C_i'$ is a  cycle in $G$ for some $i$. By applying the remark following Lemma~\ref{mainlemma}, we can rearrange the independent set within the cycle so that $u_i\in V(C_i),i = 1,2$.  This results in a strict decrease of $f(H)$ by at least 1, contradicting the minimality of $f(H)$. 
% Let $C_1$ and $C_2$ be two cycle components  of $H$, $C_1^*$ and $C_2^*$ be the corresponding subgraph of $G$, which is either a $T_1$ or a cycle. Let $d$ denote the minimum possible distance between $C_1^*$ and $C_2^*$, and let $u \in C_1$ and $v \in C_2$ be the end vertices of a shortest path between $C_1$ and $C_2$. Without loss of generality, assume that $u$ and $v$ are isolated red vertices with $\gamma(u) = r$. It is evident that $d \geq 3$.
% \textbf{Case 1:} $d_G(H) \ge 4$. We delete $w_1$ from $I_2$ and add $u_1$ to $I_2$. Let the new $I_2$ be called $I_2'$. We claim $w_1$ must be contained in some cycle of $H_{I_1, I_2'}$. Otherwise, since the remaining red vertices in $V(C_1)-u_1$ are also cycle-free, this implies $N(H(I_1, I_2')) < N(H(I_1, I_2))$ and therefore contradict Condition~\ref{condition3}. Let $w_1$ be contained in a new cycle, say $C_3$, in $H_{I_1, I_2'}$. This implies $d_G(H(I_1, I_2')) \le d_G(w_1, u_2) < d_G(H(I_1, I_2))$, which contradicts Condition~\ref{condition4}.  
% \textbf{Case 2:} $d_G(H) = 3$.
\end{proof}

Suppose $N(H) = 1$, let $C_0$ be the unique cycle component of $H$, and let $C'$ be the corresponding structure in $G$. We assume that $G$ is not the Petersen graph. A red $P_1 = u$ in $C'$ is said to have the {\em good property} if, by applying the switch operation with a neighbour $u'$ of $u$ outside $C'$, the vertex $u'$ must lie in a new cycle component of $H$ that does not involve $C_0 - {u}$. If $u$ does not have the good property, then by Condition \eqref{condition3} and the fact that $N(H) = 1$,  $u'$ and $C_0 - u$ must form  a new cycle component in $H$.

\begin{figure}[ht]
\vspace{-10mm}
\begin{center}
  \includegraphics[scale=0.45]{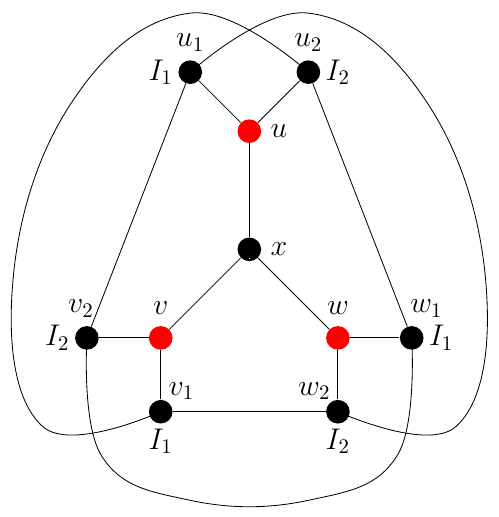} \hspace{2.5cm}
  \includegraphics[scale=0.65]{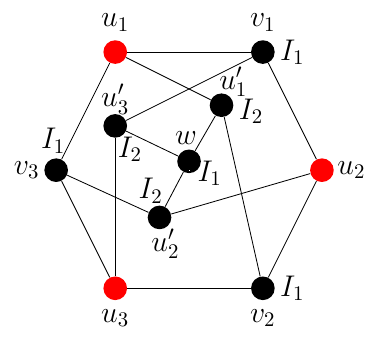} 
\caption{Form the Petersen graph.}
\end{center}
\vspace{-8mm}
\end{figure}

\begin{lem}\label{anat1}
If $C_0$  corresponds to a $T_1$ in $G$, let $V(C_0) = \{u,v,w\}$ and let $x$ be the vertex in $I_1 \cup I_2$ that is adjacent to each of $u, v$ and $w$. Then at least two of $u,v,w$ have the good property.
\end{lem}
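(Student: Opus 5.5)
We argue by contradiction: suppose two of $u,v,w$, say $u$ and $v$, both fail the good property. The plan is to use the switch operation from property (1) after Remark~\ref{remark} to pin down the local structure around the $T_1$. From that structure we either build a configuration forbidden by Observation~\ref{inadm} or Lemma~\ref{collectlemma}, or reconstruct the Petersen graph, which is excluded by assumption.

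First we fix the local structure. By Lemma~\ref{collectlemma}(2), every vertex within distance $2$ of $u,v,w$ (other than $u,v,w$ themselves) is black. Write $N(u)=\{x,u_1,u_2\}$, $N(v)=\{x,v_1,v_2\}$ and $N(w)=\{x,w_1,w_2\}$, with all of $u_i,v_i,w_i$ black. Since $u$ has no red neighbour, maximality (Condition~\eqref{condition1}) forces $u$ to have neighbours in both $I_1$ and $I_2$. The same holds for $v$ and $w$. Say $x\in I_1$; then each of $u,v,w$ has a neighbour in $I_2$ among its two outer neighbours.

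Next we analyse the failure of the good property. For $u$, consider a neighbour $u'=u_1\in I_i$, put $x$ in $I_{3-i}$ if needed, and switch $u$ with $u_1$. By property (1) this preserves Conditions~\eqref{condition1} and~\eqref{condition2}. Since $N(H)=1$ and $u$ fails the good property, Condition~\eqref{condition3} forces $u_1$ to lie on a new cycle component together with $v$ and $w$. A $T_1$ centred at $x$ is impossible, because $u$ is now black and adjacent to $x$. So by Lemma~\ref{mainlemma} the new component corresponds to an induced cycle $C''$ of $G$ through $u_1$, $v$ and $w$, alternating red and black vertices, with at most one red $P_2$. The conclusions of Lemma~\ref{mainlemma}(1)--(3) then apply to $C''$, and this yields many vertex classes and distance constraints. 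In particular, $u_1$ lies at distance exactly $2$ from both $v$ and $w$, through the black vertices $v_j$ and $w_j$. The same conclusion holds for $u_2$, provided the switch with $u_2$ is also legal. For $v$ we get symmetric conditions: $v_1$ and $v_2$ are each at distance $2$ from $u$ and $w$.

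Finally we count. Every outer neighbour of $u$ and of $v$ has a common neighbour with each of the other two leaves. The nine vertices $x,u,v,w,u_1,u_2,v_1,v_2,w_1,w_2$, together with the connecting black vertices and the cubic degree constraint, should force the $10$-vertex structure of the Petersen graph: $x$ plus three leaves plus six outer neighbours, with the outer neighbours joined by a matching that avoids triangles and $4$-cycles. Whenever the structure deviates, we expect an explicit $F_1$ or $F_2$, a chord contradicting Lemma~\ref{mainlemma}(2), or a class violation of Lemma~\ref{mainlemma}(3), for instance a vertex at distance $1$ from $C''$ that lies in the same $I_j$ as the black vertices of $C''$.

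The main obstacle is this last case analysis. We need to show that the cycles $C''$ produced for $u$ and for $v$ cannot coexist unless $G$ is the Petersen graph. We would first try to show that each such cycle has length $5$ in $G$ (a $T_2$) and passes through $w$. The remaining adjacencies among $u_i$, $v_i$, $w_i$ are then forced, and connectivity together with cubicity closes the graph into the Petersen graph.
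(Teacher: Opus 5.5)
Your overall frame (argue by contradiction, perform the switches of property (1), force the Petersen graph) matches the paper, but the step that carries all the content is not proved. You only say the structure ``should force'' the Petersen graph and that deviations are ``expected'' to give $F_1$, $F_2$ or a violation of Lemma~\ref{mainlemma}. The route you propose for that step also cannot work. After switching $u$ with $u_1$, Lemma~\ref{collectlemma}(2), applied before the switch, shows that the only red vertices within distance $2$ of $v$ or $w$ are $v$, $w$ and $u_1$. So the new cycle component is the triangle $u_1vw$ of $H$. Since $u_1,v,w$ are pairwise non-adjacent in $G$, it corresponds to the $6$-cycle $u_1v_jvxww_ku_1$, which is a $T_3$, not a $T_2$. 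Likewise, the six outer neighbours end up inducing a $6$-cycle, not a matching. There is also no ``coexistence'' problem. Every switch starts from the same original $I_1,I_2$ and yields adjacencies of $G$ itself, so the conclusions of different switches simply accumulate.

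You are missing two facts that make the forcing immediate.
\begin{enumerate}
\item All three neighbours of $x$ are red, so $x$ can be moved between $I_1$ and $I_2$ without changing the red set. Applying Condition~\eqref{condition1} in both positions shows that each of $u,v,w$ has exactly one outer neighbour in each class. So you may take $u_1,v_1,w_1\in I_1$ and $u_2,v_2,w_2\in I_2$; your ``a neighbour in $I_2$'' is too weak.
\item The six outer neighbours are distinct. Lemma~\ref{collectlemma}(2) only makes them black. If, say, $u_1=v_1$, then switching $u$ with $u_1$ merges $u_1$ and $v$ into one red $P_2$ and lowers the number of components, contradicting Condition~\eqref{condition2}; this is the paper's $F_1$ argument.
\end{enumerate}
With these, the common neighbour of $u_1$ and $v$ lies in $N(v)=\{x,v_1,v_2\}$. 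It is not $x$, and it is not $v_1$, which lies in the same class as $u_1$. Hence $u_1v_2\in E(G)$, and likewise $u_1w_2\in E(G)$. Run this for the four switches $u\leftrightarrow u_1$, $u\leftrightarrow u_2$, $v\leftrightarrow v_1$, $v\leftrightarrow v_2$. All four are legal by property (1), so your proviso is unnecessary. They give $u_1v_2,u_1w_2,u_2v_1,u_2w_1,v_1w_2,v_2w_1\in E(G)$. Now all ten vertices have degree $3$, so by connectivity $G$ is exactly this graph, which is the Petersen graph. No $F_1/F_2$ case analysis is needed.
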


\begin{proof}
Let $N(u) = \{u_1, u_2, x\}, N(v) = \{v_1,v_2,x\}, N(w) = \{w_1,w_2,x\}$. Since $x$ can be assigned in either $I_1$ or $I_2$, we can assume $u_1, v_1, w_1 \in I_1$ and $u_2, v_2, w_2 \in I_2$ since each of $u,v$ and $w$ must be adjacent to one vertex in $I_1$ and one vertex in  $I_2$. Moreover, $N(u_1)-u$ and $N(u_2)-u$ must be black, since otherwise it creates a configuration $F_1$, contradicting Observation~\ref{inadm}. The same argument also applies to $v_1, v_2, w_1, w_2$. Therefore,  $u_1$, $v_1$, $w_1$, $u_2$, $v_2$, and $w_2$ are all distinct vertices. Suppose at least two of $u,v,w$, say $u$ and $v$ do not have the good property. That is,  each of them will form a cycle with the remaining two of $u,v,w$ after the switch operation with their neighbours (not $x$). In other words, after the switch of $u$  and $u_1$ , $u_1$  will form a cycle with $v, w$  in $H$. Therefore, $d(u_1,v) = d(u_1,w) = 2$. Thus, $u_1v_2, u_1w_2 \in E(G)$ since $u_1,v_1,w_1\in I_1$. Similarly, consider the switches of $u$ and $v_2$, $v$ and $v_1,v_2$, we have $u_2v_1,u_2w_1,  v_1w_2, v_2w_1 \in E(G)$. However, this forces $G$ to be the Petersen graph, a contradiction. Thus, at least two of $u,v,w$ have the good property.
\end{proof}

\begin{remark}\label{t1property}
 From the proof we see that the neighbours of three red vertices (except $x$) of a $T_1$ are all distinct.   
\end{remark}

Now, suppose  $C'$ is $u_1v_1u_2v_2 \ldots u_kv_ku_1$ (when there is no red $P_2$) or $u_1v_1u_2v_2 \ldots u_ku_1$ (when $u_1u_k$ is the unique red $P_2$) where $u_i(1\le i\le k)$ are red vertices in $G$ and $k = |C_0|$. 

\begin{lem}\label{nosameneighbour}
 The following statements hold.
%If $C_0$  corresponds to a cycle $C'$ in $G$, then

(1) $u_i$ and $u_j$ do not have a common neighbour outside $C'$, where $1 \le i \neq j \le k$, except the case when $i = 1, j = k$, and $u_1u_k$ is the red $P_2$ in $C$;

%the neighbour not on $C'$ of a red $P_1$  and that of another red vertex cannot be the same;
   
(2) Two vertices in $C'$ that are at distance 2 cannot have the same neighbour outside $C'$;
   
(3) The consecutive vertices of $C'$ cannot have the same neighbour;  

(4) When a red $P_1$ and a black vertex in $C'$ share a common neighbour outside $C'$, then $|C_0| = 3$.   
\end{lem}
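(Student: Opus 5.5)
We argue each item by contradiction, always using Lemma~\ref{mainlemma} and Remark~\ref{remark}. Fix the labelling so that $v_1,\dots,v_k\in I_1$ by Lemma~\ref{mainlemma}(1). Then every vertex at distance $1$ from $C'$ lies in $I_2$ and every vertex at distance $2$ lies in $I_1$ (Lemma~\ref{mainlemma}(3)). Each red $u_i$ has degree $3$. If $u_i$ is a red $P_1$, its two neighbours on $C'$ are $v_{i-1},v_i$ and its third neighbour $w_i$ lies outside $C'$, since $C'$ is chordless by Lemma~\ref{mainlemma}(2). Each black $v_i$ likewise has a unique outside neighbour $y_i$. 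All of these outside neighbours are at distance $1$ from $C'$, so they belong to $I_2$.

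\textbf{Item (1).} Suppose $u_i,u_j$ share an outside neighbour $z$, and they are not the red $P_2$ pair $u_1,u_k$. Then $d(u_i,u_j)\le 2$ through $z$, so $u_iu_j\in E(H)$. Since $C$ is a cycle component of $H$, every red vertex has degree exactly $2$ in $H$, and its $H$-neighbours are the consecutive red vertices $u_{i\pm1}$. Hence $u_i,u_j$ must be consecutive in $C$. But consecutive red vertices are joined through $v_i$, and $z\ne v_i$ because $z\notin C'$. This gives $u_i$ two common neighbours with $u_{i+1}$. Then $u_i$ has neighbours $v_{i-1},v_i,z$ with $z\in I_2$, and we can switch: move $v_i$ out of $I_1$, place $u_{i+1}$ or $u_i$ suitably, and obtain a configuration $F_1$/$F_2$ or an enlargement of $I_1\cup I_2$. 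Concretely, $u_i$ and $u_{i+1}$ together with $v_i,z$ form a $4$-cycle. Rotating along $C'$ by Remark~\ref{remark} so that $v_i$ becomes red and $u_i,u_{i+1}\in I_1$ is impossible, because they share no edge, yet $z$ is then adjacent to two vertices of $I_1$, which yields an $F_1$ pattern. If $u_i,u_j$ are non-consecutive, the shared neighbour $z$ gives a chord in $H$, contradicting that $C$ is an induced cycle component when $k\ge 4$. If $k=3$, every pair of red vertices is consecutive, so the first case applies.

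\textbf{Items (2) and (3).} A pair at distance $2$ in $C'$ is either two red vertices, which is item (1), or two black vertices $v_{i-1},v_i$. If $y_{i-1}=y_i=z$, apply Remark~\ref{remark} to rotate the colouring. The rotated cycle has $u$'s in $I_1$ and the $v$'s red, so $v_{i-1},v_i$ become red and share the outside neighbour $z$, which reduces to item (1) for the rotated configuration. For item (3), consecutive vertices of $C'$ sharing a neighbour $z$ would create a triangle. Here $z$ is adjacent to both a vertex of $I_1$ (some $v_i$) and a red vertex, so $z\in I_2$ by Lemma~\ref{mainlemma}(3). The red endpoint then has neighbours $v$ and $z$ only in $I_1$ and $I_2$ respectively, with a triangle. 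Switching $u_i$ with $v_i$ lets $z$ keep its colour and frees a spot, contradicting Condition~\eqref{condition1} or producing $F_2$. I expect this to be a quick check against the pictures of $F_1,F_2$.

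\textbf{Item (4).} Suppose the red $P_1$ $u_i$ and the black $v_j$ share an outside neighbour $z$. By (3), $v_j$ is not adjacent to $u_i$ on $C'$. Rotate along $C'$ via Remark~\ref{remark} so that $v_j$ becomes red while $u_i$ stays red. This is possible whenever the red positions can be shifted: $u_i$ and $v_j$ are then both red and share $z$, so they must be consecutive in the new cycle component, hence at distance $2$ on $C'$, contradicting (2). The only escape is when no such rotation keeps $u_i$ red, which happens exactly in the short cycle. So $|C_0|=3$ is forced.

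The main obstacle is making this rotation argument precise. This means tracking how the red $P_2$ moves under Remark~\ref{remark}, and ensuring that the new $H$ still has $C$'s vertices forming one cycle component, so that the degree-$2$ argument applies.
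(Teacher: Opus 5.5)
Your proposal has genuine gaps. The serious ones are in items (3) and (4).

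\textbf{Item (3).} The switch you suggest, exchanging $u_i$ with $v_i$, is not legal. A red $P_1$ $u_i$ is also adjacent to $v_{i-1}\in I_1$, so it cannot enter $I_1$, and it cannot enter $I_2$ because $z\in I_2$. Expecting ``a quick check against $F_1,F_2$'' is not an argument, and the contradiction does not come from Conditions~\eqref{condition1} and~\eqref{condition2}. Switching $u_1$ with the common neighbour $w$ is a legitimate switch preserving both conditions; it merely replaces the red vertex $u_1$ by $w$.

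The missing ingredient is Condition~\eqref{condition3} together with $N(H)=1$. After this switch the old cycle becomes the path $u_2\cdots u_k$, and $w$ is attached to $u_k$ in $H$. The distance is $2$, or $1$ when $u_1u_k$ is the red $P_2$; you do not address that case at all. So $w$ must close a new cycle, which forces $wu_2\in E(H)$. By (1), (2) and Lemma~\ref{mainlemma}(3), this gives $wv_2\in E(G)$, so $v_2$ is the centre of a $T_1$ on $u_2,u_3,w$. Lemma~\ref{collectlemma}(2) then forces $k=3$. Now $w$ is a common outside neighbour of two vertices at distance $2$ on $C'$, contradicting (2).

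\textbf{Item (4).} Without a red $P_2$, $C'$ is an even cycle. The only recolouring that keeps it the corresponding cycle swaps all $u_i$ with all $v_i$. So $u_i$ and $v_j$ are never red simultaneously, for every $k$, not only for the short cycle. Your rotation argument therefore says nothing in this main case.

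What works is a direct switch of $u$ with the shared neighbour $w$. Then $w$ is red and adjacent to $v$, whose two cycle-neighbours are red and differ from $u$ by (3). So $v$ is the centre of a $T_1$. By Lemma~\ref{collectlemma}(2), these three red vertices form a triangle component of $H$ with no other red vertex within distance $2$. That is impossible if the path $C_0-u$ has more than two vertices, hence $|C_0|=3$.

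\textbf{Items (1) and (2).} In (1), your reduction to consecutive $u_i,u_{i+1}$ is correct, but the closing rotation sentence proves nothing. Nothing prevents $u_i,u_{i+1}\in I_1$, and $z\in I_2$ having two neighbours in $I_1$ is harmless. The reduction is also unnecessary. Unless $\{u_i,u_j\}$ is the exceptional pair, one of them, say $u_i$, is a red $P_1$ with neighbours $v_{i-1},v_i\in I_1$ and $z\in I_2$, and $z$ is adjacent to the red vertex $u_j$. This is $F_1$: switching $u_i$ with $z$ keeps $|I_1|+|I_2|$ and merges $z$ into the component of $u_j$, contradicting Condition~\eqref{condition2}.

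In (2), you miss the mixed red/black pairs at distance $2$ across the red $P_2$, such as $v_{k-1}$ and $u_1$. Rotate via Remark~\ref{remark} so that one of the two vertices is a red $P_1$. Its distance-$2$ partners on $C'$ are red, so every case reduces to (1).
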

\begin{proof}
(1) If $i\ne 1$($j\ne k$), this would create a configuration $F_1$ formed by vertices $v_{i - 1},u_i,v_i,u_j$($v_{j - 1},u_j,v_j,u_i$) with the common neighbour by Lemma~\ref{mainlemma}, contradicting Observation~\ref{inadm}.
    
(2) Suppose $u$ and $u'$ are two vertices in $C'$ with $d_{C'}(u,u') = 2$ that share a common neighbour outside $C'$. Since there is a red $P_1$ in $C'$, by  Remark~\ref{remark}, we may assume that $u$ is a red $P_1$.  Then $u'$ must also be a red vertex, which contradicts (1).

% we assume $C' = u_1v_1u_2v_2\dotsm u_ku_1$ otherwise $C' = u_1v_1u_2v_2\dotsm u_kv_ku_1$, where each $u_i(1\le i\le k)$ is a red vertex.
(3) If $C'$ has no red $P_2$ (contains a red $P_2$), then by Remark~\ref{remark}, we only need to consider the case where $u_1$ and $v_k$ ($u_1$ and $u_k$) share a common neighbour $w$. We switch $u_1$ and $w$, and $u_2 \ldots u_k$ form a path in $H$. 
%By Lemma~\ref{mainlemma}, this switching operation preserves Conditions \eqref{condition1} and  $\eqref{condition2}$. 
By Condition~\eqref{condition3} and $N(H) = 1 $, $w$ must belong to a new cycle component in $H$.  Since $d(w,u_k) = 2$ ($d(w,u_k) = 1$), $u_2\dotsm u_kw$ is a path in $H$. Since every vertex in a cycle component has degree $2$, the other edge incident to $w$ in this component must be $wu_2\in E(H)$. Thus, $d(w,u_2)\le 2$. By (1), (2), and the property that all vertices at distance $1$ from $C'$ lie in the same independent set, we know $wv_1 \notin E(G)$ and must have $wv_2\in E(G)$. Consequently, $w$, $u_2$, and $u_3$ are all adjacent to the same vertex $v_2$ (forms a $T_1$), forming a cycle component $u_2u_3wu_2$ of $H$ of order $3$. Hence we conclude that $k=3$. However, $v_2$ and $v_3$ ($u_1$) in $C'$ are at distance $2$ and share a neighbour $w$ outside $C'$, which contradicts (2).

%If $C'$ contains a red $P_2$, then by Remark~\ref{remark}, we may assume that $u_1$ and $u_k$ share a common neighbour $w$.  We now switch  $u_1$ and $w$, then, by Lemma~\ref{mainlemma}, $I_1, I_2$ also satisfy Conditions \eqref{condition1} and \eqref{condition2}. By Condition~\eqref{condition3} and $N(H) = 1 $, $w$ must belong to a new cycle component in $H$.  Since $d(w,u_k) =  1$, $u_2\dotsm u_kw$ is a path in $H$. Because every vertex in a cycle component has degree $2$, the other edge incident to $w$ in this component must be $wu_2\in E(H)$. Thus, $d(w,u_2)\le 2$.  By (1), (2), and the property that all vertices at distance $1$ from $C'$ lie in the same independent set, we deduce $wv_2\in E(G)$. Consequently, $w$, $u_2$ and $u_3$ are all adjacent to the same vertex $v_2$ (form a $T_1$), forming a cycle component $u_2u_3wu_2$ of $H$ of order 3. Hence, $k = 3$. However, $v_2$ and $u_1$ on $C'$ are at distance $2$ and share a neighbour $w$ outside $C'$, which contradicts (2).

(4) Let this red $P_1$ be denoted by $u$,  and suppose it shares a common neighbour $w$ outside $C'$ with a black vertex $v$. Then $v$ is adjacent to two red vertices $u_1$ and $u_2$ with $u\notin \{u_1,u_2\}$ by (3). Switching $u$ and $w$ would  then create a $T_1$ in $G$.  By Lemma \ref{collectlemma}, $|C_0| = 3$ .
\end{proof}

\begin{lem}\label{anat3}
If $C_0$  corresponds to a $T_3$ in $G$, then at least two of $\{u_1, u_2, u_3\}$ have the good property.
%Then at least two of $u_1,u_2,u_3$ can form a new cycle in $H$ not involving the other two remaining vertices of $u_1,u_2,u_3$ via the switch operation with 
%When $C_0$ is formed by a $T_3$.   Consider two fixed red vertices $u_1$ and $u_2$ in $C_0^*$ with $\gamma(u_1) = r_1$ and $\gamma(u_2) = r_2$, and let the third red vertex be $u_3$ with $\gamma(u_3) = r$.  If we push out any of  $r_1,r_2$,   the three labels $r_1,r_2$ and $r_3$ again form a cycle of $H$, then $G$ is the Petersen graph. 
\end{lem}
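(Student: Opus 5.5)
We argue along the lines of Lemma~\ref{anat1}: suppose two of the red vertices fail the good property and show that $G$ is then forced to be the Petersen graph. Write $C' = u_1v_1u_2v_2u_3v_3u_1$. By Lemma~\ref{mainlemma}(1) we may assume $v_1,v_2,v_3\in I_1$. By Lemma~\ref{mainlemma}(2), $C'$ has no chord. By Lemma~\ref{mainlemma}(3), every vertex at distance $1$ from $C'$ lies in $I_2$ and every vertex at distance $2$ lies in $I_1$. Let $a_i$ be the neighbour of $u_i$ off $C'$ and $b_i$ the neighbour of $v_i$ off $C'$; all of these lie in $I_2$.

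First we show the six vertices $a_1,a_2,a_3,b_1,b_2,b_3$ are distinct. Lemma~\ref{nosameneighbour}(1) separates the $a_i$. Parts (2) and (3) separate the $b_i$ from one another (the $v_i$ are pairwise at distance $2$ on $C'$), and also separate each $b_i$ from the $a_j$ that are adjacent to or at distance $2$ from $v_i$ on $C'$. Since $|C'|=6$, every $u_j$ is at distance $1$ or $3$ from $v_i$. So the only remaining coincidence is $a_j=b_i$ with $u_j$ antipodal to $v_i$. This is the case allowed by Lemma~\ref{nosameneighbour}(4) when $|C_0|=3$, and we rule it out directly. Suppose $a_1=b_2$. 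Switching $u_1$ with $a_1$ leaves $u_2,u_3$ red, and both are at distance $2$ from $a_1$, via $v_2$ and via $v_2$ or $a_3$. Then we check whether a cycle or an $F_1$/$F_2$ configuration appears; we expect Observation~\ref{inadm} to exclude it. If this fails, we carry the coincidence along as a special case.

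Now suppose $u_1$ and $u_2$ do not have the good property. By the definition of the good property, after switching $u_1$ with $a_1$ (legal by switch property (2)), the vertex $a_1$ forms a cycle component of $H$ together with $u_2,u_3$. Since $u_2,u_3$ are at distance $2$ via $v_2$, we need $d(a_1,u_2)\le 2$ and $d(a_1,u_3)\le 2$. The new colouring still satisfies Conditions \eqref{condition1} and \eqref{condition2}, so Lemma~\ref{mainlemma}(1) applies to the new component. Because $a_1$ is not adjacent to $u_2,u_3$ (otherwise there would be a chord or a violation of Lemma~\ref{nosameneighbour}), it has a common neighbour with each of them. These common neighbours must lie off the old $C'$ and in $I_1$ after the switch, so they are second neighbours of $C'$. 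Since $a_1$ has only two neighbours besides $u_1$, we conclude that $a_1$ is adjacent to a vertex $x_{12}\in N(a_2)$ and to a vertex $x_{13}\in N(a_3)$, where possibly $x_{12}=x_{13}$, giving a $T_1$. Doing the same for $u_2$ gives that $a_2$ is adjacent to neighbours of $a_1$ and of $a_3$. We then look at the remaining neighbours of the $b_i$. Applying the symmetric switches from Remark~\ref{remark}, which exchange the roles of the $u_i$ and $v_i$ by replacing $\{v_i\}$ with $\{u_i\}$ in $I_1$, shows that the $b_i$ satisfy the analogous adjacency constraints.

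The main obstacle is assembling these adjacency constraints into a contradiction. The count is tight: $C'$ has six vertices and six pendant neighbours, and the Petersen graph is exactly a $6$-cycle together with its pendant structure closed up by three vertices at distance $2$. We plan to show that the vertices at distance $2$ from $C'$ number exactly three, that they are joined as forced, and that the resulting graph is cubic on $10$ vertices with girth $5$, hence the Petersen graph. This contradicts our assumption on $G$. Whenever the constraints instead force a shorter cycle, we will exhibit an $F_1$ or $F_2$, contradicting Observation~\ref{inadm}, or exhibit an augmentation of $I_1\cup I_2$, contradicting Condition~\eqref{condition1}.
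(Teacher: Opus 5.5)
Your overall strategy matches the paper's: assume $u_1,u_2$ fail the good property and force the Petersen graph. However, both of your concrete steps go wrong.

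\textbf{The coincidence you want to rule out is forced.} The case $a_1=b_2$ cannot be excluded; it is exactly what failure of the good property produces, so your ``special case'' is in fact the whole proof. After switching $u_1$ with $a_1$, the new cycle must be the triangle $a_1u_2u_3$, so $d(a_1,u_2)\le 2$. Since $a_1u_2\notin E(G)$, $a_1$ and $u_2$ have a common neighbour in $N(u_2)=\{v_1,v_2,a_2\}$. But $a_1a_2\notin E(G)$ (both lie in $I_2$), and $a_1v_1\notin E(G)$ by Lemma~\ref{nosameneighbour}(3). Hence $a_1v_2\in E(G)$, i.e.\ $a_1=b_2$. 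The switch then produces the $T_1$ $\{a_1,u_2,u_3\}$ centred at $v_2$. This is a legitimate configuration that no $F_1$/$F_2$ argument excludes, which is why Lemma~\ref{nosameneighbour}(4) permits it when $|C_0|=3$. In the same way, $u_2$ failing forces $a_2=b_3$.

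\textbf{Distance error and wrong target.} In your second step, a vertex $x_{12}\in N(a_2)$ adjacent to $a_1$ gives only $d(a_1,u_2)\le 3$. So the adjacency constraints you derive are not what the hypothesis yields. Your target picture is also inconsistent: six distinct pendant vertices plus three second neighbours already give $15$ vertices. Relative to a $6$-cycle, the Petersen graph has only three outer neighbours, each adjacent to an antipodal pair ($a_1=b_2$, $a_2=b_3$, $a_3=b_1$), and one vertex $w$ at distance $2$ adjacent to all three.

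\textbf{Missing idea.} You need a way to extract information about $u_3$, which may have the good property, and about the third neighbours of the $a_i$. Transporting the hypothesis via the swap in Remark~\ref{remark} does not work: after exchanging $\{v_i\}$ and $\{u_i\}$ in $I_1$, nothing says the new red vertices $v_i$ fail the good property. The paper instead applies the minimality of $N(H)=1$ to tailored reassignments: every admissible choice of $I_1,I_2$ must contain a cycle component.
\begin{enumerate}
\item Using the forced edge $a_1v_2$, it puts $u_1,v_2$ into $I_2$ and $u_2$ into $I_1$, and makes $v_1,a_1$ red. This is valid because $v_2$'s third neighbour is $a_1$. The only possible cycle is then $v_1a_1u_3$, forcing $d(v_1,u_3)=2$, i.e.\ $a_3v_1\in E(G)$.
\item The double switches $(u_1,a_1),(u_2,a_2)$ and $(u_1,a_1),(u_3,a_3)$ then force $d(a_1,a_2)=d(a_1,a_3)=2$.
\item Neither $u_1$ nor $v_2$ (whose neighbours are already $u_2,u_3,a_1$) can be the common neighbour. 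So it must be the third neighbour $w$ of $a_1$, which closes up the Petersen graph on $10$ vertices.
\end{enumerate}
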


\begin{proof}
By Lemma~\ref{mainlemma}, we can assume $v_1, v_2, v_3 \in I_1$. Let $N(u_1) = \{v_1, v_3, u_1'\}$, $N(u_2) = \{v_1, v_2, u_2'\}$, and $N(u_3) = \{v_2, v_3, u_3'\}$. Suppose $u_1$ and $u_2$ do not have the good property. In other words, $u_1'$ ($u_2'$) forms a cycle component in $H$ with $u_2, u_3$ ($u_1, u_3$) after the switch operation with $u_1$ ($u_2$). By Lemma \ref{mainlemma}, we know that $u_1',u_2'$ and $u_3'$ belong to $I_2$ and, therefore, are pairwise nonadjacent. 

By assumption, we have $d(u_1',u_2),d(u_1',u_3)\le 2$ and $d(u_2',u_1),d(u_2',u_3)\le 2$.  By Lemma \ref{nosameneighbour}, we have $u_1'v_2 \in E(G)$ and $u_2'v_3\in E(G)$. Moreover, when we switch $u_1$ and $u_1'$, vertices $u_1'$, $u_2$ and  $u_3$ form a $T_1$ in $G$. By Lemma \ref{collectlemma}, there are no other red vertices other than $u_2,u_3$ within distance two from $u_1'$.

We claim that $v_1u_3' \in E(G)$. To see this, we reassign the vertices as follows: assign $u_1$ and $v_2$ to $I_2$, assign $u_2$ to $I_1$, and remove $v_1$ from $I_1$ and $u_1'$ from $I_2$, while leaving all other vertices unchanged. Then $I_1$ and $I_2$ remain two disjoint independent sets, and again satisfy Conditions \eqref{condition1} and \eqref{condition2}. Moreover, we have $d(u_3,u_1') = d(v_1,u_1') = 2$. We further note that, by Lemma~\ref{mainlemma}, all vertices outside of $C'$ and within distance two from $v_1, u_3$ are in $I_1 \cup I_2$. To preserve the unique cycle component of $H$, we must have $d(v_1,u_3) = 2$.  

We now perform two switches, i.e., the pairs $(u_1, u_1')$ and $(u_2,u_2')$. Then $I_1$ and $I_2$ remain two disjoint independent sets, and again satisfy Conditions \eqref{condition1} and \eqref{condition2}. Moreover, we have $d(u_3,u_1') = d(u_3,u_2') = 2$. Since there are no other red vertices other than $u_2',u_3$ within distance two from $u_1'$, $u_1'$ can only form a cycle with $u_2', u_3$. To preserve the unique cycle component of $H$, we must have $d(u_1',u_2') = 2$. By symmetry, we can switch $(u_1, u_1')$ and $(u_3,u_3')$, and we obtain $d(u_1',u_3') = 2$. Therefore, there exists another vertex $w$  adjacent to $u_1',u_2'$, and $u_3'$. However, this forces $G$ to be the Petersen graph,  a contradiction. 
%Therefore, at least two of $\{u_1, u_2, u_3\}$ have the good property. reassign the vertices as follows: assign $u_1$ and $u_2$ to $I_2$, and remove $u_1'$ and $u_2'$ from $I_2$.
\end{proof}

\begin{lem}\label{ord4}
If either the order of $C_0$ is at least $4$, or $C_0$ corresponds to a $T_2$ in $G$. Then every red $P_1$ in $C'$ has the good property.  
\end{lem}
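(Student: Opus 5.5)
We argue by contradiction. Fix a red $P_1$, say $u_i$, in $C'$ without the good property, and let $w$ be its unique neighbour outside $C'$. By Lemma~\ref{mainlemma}, when $v_1,\dots,v_k\in I_1$ we have $w\in I_2$. Since $u_i$ lacks the good property, switching $u_i$ and $w$ (which preserves Conditions~\eqref{condition1} and~\eqref{condition2} by the switch property (2)) makes $w$ red. Together with $C_0-u_i$, the vertex $w$ must then form the new unique cycle component of $H$. In $H$, the set $C_0-u_i$ is a path $u_{i+1}\dotsm u_{i-1}$ of order $k-1$, so $w$ closes it into a cycle. Because $w$ has degree $2$ in a cycle component, we need $d(w,u_{i-1})\le 2$ and $d(w,u_{i+1})\le 2$. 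We also need $d(w,u_j)\ge 3$ for every other red $u_j$, and no other red vertex may lie within distance $2$ of $w$.

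Next we locate the paths of length at most $2$ from $w$ to $u_{i\pm1}$. By Lemma~\ref{nosameneighbour}(1), (2) and (3), $w$ is not adjacent to $u_{i\pm1}$, except possibly $u_{i-1}$ when $u_{i-1}u_i$ is the red $P_2$ in the $T_2$ or $P_2$ case. Also, $w$ shares no neighbour with these vertices outside $C'$. So each such path must pass through a black vertex of $C'$. The black vertex $v_i$ (respectively $v_{i-1}$) is already a neighbour of $u_i$, so Lemma~\ref{nosameneighbour}(3) forbids $w$ from being adjacent to it. Hence $w$ must be adjacent to a black vertex $v_j$ of $C'$ that is a neighbour of $u_{i+1}$, other than $v_i$, which means $v_j=v_{i+1}$. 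Symmetrically, $w$ must be adjacent to $v_{i-2}$, or must reach $u_{i-1}$ through an edge in the red $P_2$ case.

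Then $u_i$ (a red $P_1$) and the black vertex $v_{i+1}$ share the outside neighbour $w$. Lemma~\ref{nosameneighbour}(4) gives $|C_0|=3$, which settles the case $|C_0|\ge 4$.

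For the $T_2$ case, where $|C_0|=3$ and $C'$ is a $5$-cycle $u_1v_1u_2v_2u_3u_1$ with red $P_2$ equal to $u_3u_1$, the red $P_1$ is $u_2$. We must show that $w$ cannot be within distance $2$ of both $u_1$ and $u_3$. By the argument above, $w$ needs a black neighbour on $C'$ adjacent to $u_1$ or $u_3$. The only black vertices are $v_1$ and $v_2$, and both are neighbours of $u_2$, so Lemma~\ref{nosameneighbour}(3) forbids this. A direct edge from $w$ to $u_1$ or $u_3$ is forbidden by Lemma~\ref{nosameneighbour}(2), since $d_{C'}(u_2,u_1)=d_{C'}(u_2,u_3)=2$. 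A common outside neighbour is forbidden by Lemma~\ref{nosameneighbour}(1). This is a contradiction.

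The main obstacle is the case analysis: we must enumerate every length-$\le 2$ path from $w$ to $u_{i\pm1}$, including paths through a black vertex of $C'$ and paths through a vertex outside $C'$. We must also check that Lemma~\ref{nosameneighbour}(4) applies after the relabeling given by Remark~\ref{remark}. We would handle these carefully, using Lemma~\ref{mainlemma}(2) to rule out chords of $C'$.
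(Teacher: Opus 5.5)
Your proposal is correct and is essentially the paper's proof. The paper asserts, via Lemma~\ref{nosameneighbour}, that the outside neighbour of the red $P_1$ is at distance at least $3$ from both ends of the path $C_0-u$ and so cannot close it into a cycle; your enumeration of the length-$\le 2$ paths makes this explicit, finishing with Lemma~\ref{nosameneighbour}(4) when $|C_0|\ge 4$. One citation should be fixed: a common neighbour of $w$ and $u_j$ lying outside $C'$ is excluded by Lemma~\ref{mainlemma}(3), not by Lemma~\ref{nosameneighbour}(1), because that vertex and $w$ would both be at distance $1$ from $C'$, hence both in $I_2$, yet adjacent.
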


\begin{proof}
Suppose $u$ is a red $P_1$, then $C_0 - u$ is a  path $P$. Let $u'$ be the neighbor outside $C'$ of $u$. By Lemma~\ref{nosameneighbour}, the distance between  $u'$ and each of the two red end vertices of $P$ is at least 3. Since $N(H) = 1$, $u'$ must be in a new cycle component in $H$ when we switch $u$ and  $u'$.  Therefore, consider the two possible neighbours in $H$ of $u'$, the new cycle must be formed by $u'$ together with another path component of $H$.
\end{proof}

For a path component $P$ in $H$, let $Y$ be the set of vertices in $G$ whose distance to the red vertices in $P$ is less than 3. We then define $P^* := G[Y]$. A black vertex $u$ in 
$P^*$ is called an {\em interface} of $P$ if 

(1) $u$ lies at a distance exactly 2 from the set of red vertices; 

(2) under some choice of $I_1$ and $I_2$ satisfying Conditions~\eqref{condition1},~\eqref{condition2}, and~\eqref{condition3},  $u$ is red with all other vertices of $P^*$ unchanged;

(3) $u$ and the other  red vertices in $P^*$ form a cycle in $H$.

An {\em inner vertex} of $P^*$ is a black vertex adjacent to two red vertices, one of which is a red $P_1$. 

\begin{lem}\label{samein}
    The inner vertices of $P^*$ must be in the same independent set.
\end{lem}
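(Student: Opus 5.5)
The plan is to mimic the proof of Lemma~\ref{mainlemma}(1): first show that two inner vertices which are consecutive along $P$ lie in the same independent set, then propagate this along $P$.

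\textbf{Setup.} Let $P = x_1x_2\dots x_m$ be the path component of $H$. Consecutive red vertices $x_j,x_{j+1}$ are either adjacent in $G$ (the unique red $P_2$, by Lemma~\ref{collectlemma}(1)) or joined through a black vertex $y_j$ adjacent to both. An inner vertex is such a $y_j$ for which at least one of $x_j,x_{j+1}$ is a red $P_1$. By Lemma~\ref{collectlemma}(2), a black vertex with three red neighbours would give a $T_1$, i.e.\ a triangle component of $H$. Hence every inner vertex of $P^*$ has exactly two red neighbours and arises from an edge of $P$ in this way. So it suffices to treat two inner vertices $y,z$ that share a red neighbour $x$, and then induct along $P$. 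The red $P_2$ is the only place the chain of black connectors is interrupted; there the two red endpoints are adjacent, and a switch as in Remark~\ref{remark} should move the red $P_2$ along, letting the chain continue.

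\textbf{Key step.} Suppose $x$ is a red $P_1$ with two black inner neighbours $y\in I_1$ and $z\in I_2$, and let $w$ be the third neighbour of $x$. Since $x$ is red and condition~\eqref{condition1} holds, $x$ has neighbours in both $I_1$ and $I_2$. As $x$ is a $P_1$, $w$ is black, say without loss of generality $w\in I_1$. Then $x$ has two neighbours $y,w\in I_1$ and one neighbour $z\in I_2$. The vertex $z$ has a second red neighbour $x'$, which is the neighbouring red vertex of $x$ on $P$.

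This is exactly the configuration $F_1$ used in the proof of Lemma~\ref{mainlemma}(1), where $u_2$ had neighbours $v_1\in I_1$, $v_2\in I_2$, $w_2\in I_1$, and $v_2$ was adjacent to another red vertex. That configuration is forbidden by Observation~\ref{inadm}, giving a contradiction. If instead $w\in I_2$, the roles of $y$ and $z$ are symmetric, so the same argument applies.

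\textbf{Main obstacle.} The hard part is the case where the shared red vertex $x$ is an endpoint of the red $P_2$, so it is not a red $P_1$. The definition of an inner vertex requires only one of its two red neighbours to be a $P_1$. So two inner vertices may meet only across a red $P_2$ $x\,x''$, or at a red vertex whose third neighbour is red.

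For this case I would first try a local switch that turns the red $P_2$ into two $P_1$'s, or moves it elsewhere, while preserving conditions~\eqref{condition1}--\eqref{condition3}, in the spirit of Remark~\ref{remark}. I would then reapply the $F_1$ argument. If that fails, I would use the maximality in condition~\eqref{condition1} directly: a red vertex adjacent to two vertices of the same $I_i$ and to a red vertex can be added to $I_{3-i}$ unless its unique $I_{3-i}$-neighbour is present, and this pins down which set each inner vertex belongs to.

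Chaining the key step along $P$ then shows that all inner vertices lie in a single $I_i$.
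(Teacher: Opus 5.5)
Your treatment of two inner vertices sharing a red $P_1$ is the paper's first step (configuration $F_1$), and concluding by chaining along $P$ is also how the paper finishes. The gap is the case you call the main obstacle, which the proposal never closes. Take inner vertices $y$ and $z$ adjacent to the two ends $u$ and $v$ of the red $P_2$, with $y$ also adjacent to a red $P_1$ $x$ and $z$ to a red $P_1$ $x'$. This is exactly where the chain of shared red $P_1$'s breaks, so without it the lemma is unproved.

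Neither fallback you suggest does the job:
\begin{itemize}
\item Condition~\eqref{condition1} alone cannot work. Each end of the red $P_2$ has one red neighbour, hence exactly one neighbour in $I_1$ and one in $I_2$. So maximality says nothing about where $y$ and $z$ lie.
\item Remark~\ref{remark} does not supply the switch, because it is about cycle components of $H$, not path components.
\end{itemize}

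The missing ingredient is Condition~\eqref{condition2}, which the paper uses through the second forbidden configuration $F_2$ of Observation~\ref{inadm}. Suppose $y\in I_1$ and $z\in I_2$, and replace $y$ by $u$ in $I_1$ and $z$ by $v$ in $I_2$.
\begin{itemize}
\item Both sets stay independent, since $y$ is the only $I_1$-neighbour of $u$ and $z$ is the only $I_2$-neighbour of $v$.
\item The total size is unchanged, so Condition~\eqref{condition1} still holds.
\item The red components $\{u,v\}$, $\{x\}$, $\{x'\}$ are replaced by components containing $xy$ and $x'z$. So the number of components of $G-I_1-I_2$ drops, contradicting \eqref{condition2}.
\end{itemize}

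Your idea of moving the red $P_2$ can also be completed, but only if you write the switch down and check it. Switch $u$ and $y$ (with $y\in I_1$). This preserves \eqref{condition1} and \eqref{condition2}, because the components $\{u,v\},\{x\}$ become $\{v\},\{x,y\}$. Now $v$ is a red $P_1$ adjacent to $u\in I_1$, to $z$, and to its remaining black neighbour. If $z\in I_2$, that remaining neighbour lies in $I_1$, which gives an $F_1$ at $v$. Hence $z\in I_1$.
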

\begin{proof}
If two inner vertices are adjacent to a common red $P_1$, then they must belong to the same independent set otherwise  an $F_1$ would appear in $G$. If two inner vertices are adjacent to the end vertices of a red $P_2$, then they must belong to the same independent set, since otherwise an $F_2$ would appear in $G$. Furthermore, since $P$ forms a path component of $H$, this forces all inner vertices into the same independent set. 
\end{proof}

We analyze properties of interfaces under the following three cases.

\begin{itemize}
\item When $|P|\ge 3$, $P^*$ must have inner vertices. By Lemma \ref{samein}, we assume the inner vertices in $P^*$ are all in $I_1$. If a black vertex $x$ is an interface of $P$. By the definition of interface, $x$ can be turned into red with all other vertices in $P^*$ unchanged. We turn $x$ into red to form a cycle $C$ in $H$. By Lemma \ref{mainlemma}, and since every red vertex must be adjacent to a vertex in $I_2$, it follows that $x$ is adjacent to the end vertices of $P$ via two non‑inner black vertices $z_1, z_2 \in I_1$. Furthermore, $C$ is the unique cycle component in $H$. Consequently, by Lemma \ref{nosameneighbour}, $P$ has only one interface (see the first graph in Figure~4). 
    
\item Consider the case $|P| = 2$, where $P^*$ contains a red $P_2 = uv$. If $u$ and $v$ share a common neighbour $w$, then we argue that the other neighbour $w'$ of $w$ is not an interface. Suppose not, $w'$ is an interface. By the definition of interface, $w'$ can be turned into red with all other vertices in $P^*$ unchanged. In this case, $w$ could be placed in either $I_1$ or $I_2$, which is a contradiction with Condition~\eqref{condition1} since we can add $u$ to $I_1$ or $I_2$. Consequently, if $P^*$ contains two interfaces, Lemma \ref{mainlemma} implies they must be the blue vertices depicted in the second graph in Figure \ref{interface}.

 %To see this, assume for contradiction that there is a valid choice of $I_1$ and $I_2$ satisfying Conditions \eqref{condition1}-\eqref{condition3} that turns $w$ red and leaves the rest of $P^*$ unchanged.

\item In the case $|P| = 2$, where $P^*$ contains  two red $P_1$s, namely $u$ and $v$, assume that the inner vertex $x$ of $P^*$ are in $I_1$. Suppose there exists an interface $w$ of $P$ adjacent to  $x$. By the definition of interface, $w$ can be turned into red with all other vertices in $P^*$ unchanged. Then $u$, $v$, and $w$ induce a $T_1$ in $G$. By Remark~\ref{t1property}, the neighbours of $u$, $v$, and $w$ (other than $x$) are all distinct. In the presence of a second interface $w'$, this $w'$ must be connected to $u$ and $v$ through two  non-inner black vertices in $I_1$ by Lemma \ref{mainlemma}. It is illustrated by the blue vertices in the third graph in Figure 4. Otherwise, if no such an interface  $w$ exists, Lemma \ref{mainlemma} implies there is at most one interface, which connects to $u$ and $v$ via two non-inner vertices belonging to $I_1$. 
\end{itemize}

\begin{figure}[htbp]
\vspace{-6mm}
    \centering

    \hspace{-5mm}
    \subfloat{
\begin{tikzpicture}[scale = 0.8]
    \scoped[every node/.style = {circle, draw,fill, inner sep = 1.6pt}] 
    \draw (0,0) node[red] (r1) {}
    (0,2) node[red] (r2) {}
    (0,-0.8)
    (2,1) node[blue] (b) {}
    (1,2.4) node[label = right:$I_1$] (x) {}
    (1,-.4) node[label = right:$I_1$] (y) {}
    ;
    \draw[dash dot] (r1) -- (r2);
    \draw (r1) -- (y) -- (b) -- (x) -- (r2)
    ;
\end{tikzpicture}
}
\hspace{2mm}
%\qquad
\subfloat{
\begin{tikzpicture}[scale = .8]
    \scoped[every node/.style = {circle, draw,fill, inner sep = 1.6pt}] 
        \draw (0,-1.3) 
        (0,0) node[red] (r1) {}
        +(-18:1) node[label = below:$I_1$] (x2) {} +(-162:1) node[label = below:$I_2$] (y2) {}
        (0,1) node[red] (r2){}
        +(18:1) node[label = above:$I_1$] (x1) {} +(162:1) node[label = above:$I_2$] (y1) {}
        (2,0.5) node[blue] (b1) {}
        (-2,0.5) node[blue] (b2) {};
        \draw (r1) -- (r2) -- (x1) -- (b1) -- (x2) -- (r1)
        (r1) -- (y2) -- (b2) -- (y1) -- (r2)
        ;
\end{tikzpicture}
}
%\hspace{1mm}
%\qquad
    \subfloat{
    \begin{tikzpicture}[scale = .8]
    \path (-1.2,0){}
    (1.2,0) {};
        \scoped[every node/.style = {circle, draw,fill, inner sep = 1.6pt}] 
        \draw (0,0) node[label = right:$I_1$] (x) {} 
         (-1,0) node[blue] (b1) {}
         (0,1) node[red] (r1) {} +(150:1) node[label =   left:$I_2$] (rr1){}
         (0,-1) node[red] (r2) {} +(-150:1) node[label = left:$I_2$] (rr2){}
        (1,1) node[label = above:$I_1$] (y) {}
        (1,-1) node[label = below:$I_1$] (z) {}
        (2,0) node[blue] (b2) {}
        ;
        \draw (x) -- (b1)
        (x) -- (r1) -- (rr1)
        (x) -- (r2) -- (rr2)
        (r1) -- (y) -- (b2) -- (z) -- (r2)
        ;
    \end{tikzpicture}
    }
\hspace{2mm}
\quad
    \subfloat{
    \begin{tikzpicture}[scale = .8]
        \draw[red] (0,0) circle [radius = 1cm];
         \scoped[every node/.style = {circle, draw,fill, inner sep = 1.6pt}] 
         \draw  (2cm,1cm) node[label = below:$u$] (u1) {}
         (2cm,-1cm) node[label = below:$v$] (u2) {}
         (3cm,0cm) node[label = below right:$w$] (v) {}
         (4cm,0cm) node[red,label = below:$z$] (w) {}
         (30:1cm) node[red,label = left:$u_1$] (uu1) {}
         (-30:1cm) node[red,label = left:$u_2$] (uu2) {}
         ;
         \draw (uu1) -- (u1) -- (v) -- (w);
         \draw (uu2) -- (u2) -- (v);
    \end{tikzpicture}
    }

\caption{The interfaces of path component of $H$ and the structure used in Lemma \ref{admissthird}.}\label{interface}
\vspace{-3mm}
\end{figure}
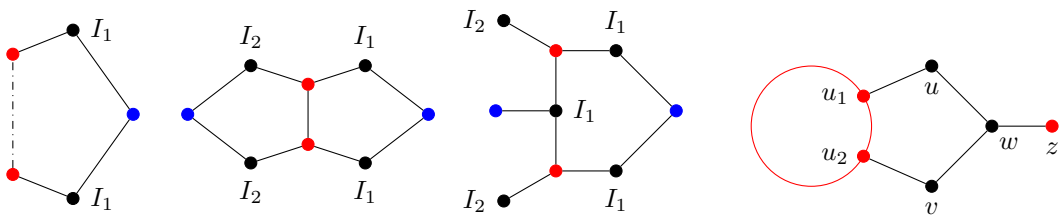

\begin{lem}\label{admissthird}
Suppose $C$ is the unique cycle component of $H$. Let $C'$ be the corresponding structure in $G$ (a $T_1$ or a cycle). If $C'$ is a $T_1$, let $U$ be its three red vertices; otherwise, let $U = V(C')$. For a red vertex $z \notin V(C)$, each of its neighbours $w$ in $G$ cannot form a path $u_1uwvu_2$ of length $4$, where $u_1, u_2 \in U$ and $u,v \notin U$ (see Figure \ref{interface}).
%there is no vertex $w \in N(z)$ such that there exist two vertices $u_1,u_2$ in $C'$ and two distinct vertices $u,v$ for which $u_1uw$ and $u_2vw$ are both paths of length 2 in $G$(see Figure \ref{thirdred}). 
\end{lem}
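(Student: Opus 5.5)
We argue by contradiction and use the structure around $C'$ given by Lemma~\ref{mainlemma} and Remark~\ref{t1property}. Suppose $z\notin V(C)$ is red and has a neighbour $w$ lying on a path $u_1uwvu_2$ with $u_1,u_2\in U$ and $u,v\notin U$. Then $d(w,C')\le 2$. If $C'$ is a cycle, Lemma~\ref{mainlemma}(3) gives $w\in I_1\cup I_2$. If $C'$ is a $T_1$, Lemma~\ref{collectlemma}(2) gives the same conclusion. Moreover $w$ is at distance at least $2$ from $U$: it is not adjacent to $u_1$ or $u_2$ along the path, and if it were adjacent to some vertex of $U$ then $z$ would lie within distance $2$ of $U$. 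By Lemma~\ref{collectlemma}(2) and Lemma~\ref{mainlemma}(3) that would force $z$ to be black, a contradiction. Likewise $u,v$ are at distance $1$ from $U$.

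When $C'$ is a cycle with $v_i\in I_1$, Lemma~\ref{mainlemma}(3) gives $u,v\in I_2$ and $w\in I_1$. When $C'$ is a $T_1$, after recolouring $x$ suitably we likewise get $u,v$ black and $w$ black. Since $w$ has degree $3$ with neighbours $u,v,z$, the vertex $z$ is red. By Condition~\eqref{condition1}, $z$ must then have a neighbour in each of $I_1$ and $I_2$.

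The key step is a switch that creates a second cycle component or violates Condition~\eqref{condition1}. I would try the following. Switch $u_1$ with $u$ (by Remark~\ref{remark} and the switch properties listed after it, this preserves Conditions~\eqref{condition1} and~\eqref{condition2}), and do the same for $u_2$ with $v$. After these two switches $w$ has two red neighbours $u,v$ and the red neighbour $z$, all adjacent to the black vertex $w$. Either $w$ can now be moved into an independent set, increasing $|I_1|+|I_2|$ and contradicting Condition~\eqref{condition1}, or $u,v,z$ form a $T_1$ at $w$, which is a triangle component in $H$ by Lemma~\ref{collectlemma}(2). In the second case, Lemma~\ref{collectlemma}(2) also requires that every vertex within distance $2$ of $z$ be black. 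But $z$ lies in a path component $P$ of $H$, so this conflicts with the red vertices of $P$ near $z$, or else the switch alters the component count against Condition~\eqref{condition2} or~\eqref{condition3}. Alternatively, if only one switch is used (say $u_1$ with $u$), then $u$ and $z$ are at distance $2$ through $w$. This joins the path component of $z$ to $C-u_1$, and we can compare the resulting cycle count with Condition~\eqref{condition3}.

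The main obstacle is handling the two switches simultaneously. When $u_1$ and $u_2$ are adjacent on $C'$, or are both endpoints of the red $P_2$, the double switch may break independence or Condition~\eqref{condition2}. I expect this to require a small case analysis using Lemma~\ref{nosameneighbour}, which gives $u\ne v$ and excludes shared outside neighbours, together with the interface structure of Figure~\ref{interface}. For a $T_1$, the separate case uses Remark~\ref{t1property} so that $u_1,u_2$'s outside neighbours are distinct. In the remaining configurations I would exhibit a forbidden $F_1$ or $F_2$ from Observation~\ref{inadm}.
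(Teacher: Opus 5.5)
\textbf{Gap.} Your argument breaks down at the step you yourself flag as the main obstacle, and that step carries the whole lemma. The simultaneous switch of $u_1$ with $u$ and of $u_2$ with $v$ requires $u_1$ and $u_2$ to be red $P_1$'s under one and the same choice of $I_1,I_2$. Remark~\ref{remark} cannot arrange this in general:
\begin{itemize}
\item a $T_2$ contains exactly one red $P_1$ in every arrangement;
\item on an even $C'$ (no red $P_2$) the colours alternate, so if $u_1,u_2$ lie at odd distance along $C'$, one of them is always black;
\item on an odd $C'$, two adjacent vertices are both red only as the red $P_2$, and switching an end of the red $P_2$ increases the number of components, which violates Condition~\eqref{condition2}.
\end{itemize}

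Where the double switch is legal, your idea does partly work. Then $u$ and $v$ become isolated red vertices, Conditions~\eqref{condition1} and~\eqref{condition2} survive, and Lemma~\ref{collectlemma}(2) applied at $w$ is contradicted as soon as $z$ has a neighbour in $H$. But if $z$ is an isolated vertex of $H$, the double switch only produces a new $T_1$ on $u,v,z$ centred at $w$ while destroying $C$. Then $N(H)$ stays $1$ and nothing is contradicted. Your other branch, that $w$ could be added to an independent set, is vacuous because $w$ is already black. The promised case analysis via $F_1,F_2$ is never given, so the proof is incomplete.

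\textbf{Missing idea.} You need to decouple the two switches; this is what the interface notion is for. The paper rearranges $C'$ so that $u_1$ is a red $P_1$, and switches only $u_1$ with $u$.
\begin{itemize}
\item By Condition~\eqref{condition3}, the new $H$ has a cycle component, and it must contain the only new red vertex $u$.
\item Since $u$ is $H$-adjacent to $z$ through $w$, and components are paths or cycles, this cycle is $u$ together with the whole path component $P$ containing $z$. It is not a union of $P$ with $C-u_1$, as your fallback suggests.
\item Hence $z$ is an end vertex of $P$ and $u$ is an interface of $P$. An isolated $z$ is already impossible at this point, since it would have $H$-degree $1$ in that cycle.
\end{itemize}
Repeating this for $u_2$ and $v$, possibly under a different rearrangement of $C'$ (which is disjoint from $P^*$), makes $v$ a second interface. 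So $P^*$ is one of the two-interface configurations in the middle two graphs of Figure~\ref{interface}. Checking the neighbours of its red end vertices with Lemmas~\ref{mainlemma} and~\ref{nosameneighbour} shows that no vertex $w$ adjacent to $z$ and to both interfaces exists. Each interface is certified by its own admissible choice of $I_1,I_2$, so no simultaneous switch is ever needed.
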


\iffalse

\begin{figure}[htbp]
    \centering
    \subfloat{
    \begin{tikzpicture}
        \draw[red] (0,0) circle [radius = 1cm];
         \scoped[every node/.style = {circle, draw,fill, inner sep = 1.6pt}] 
         \draw  (2cm,1cm) node[label = below:$u$] (u1) {}
         (2cm,-1cm) node[label = below:$v$] (u2) {}
         (3cm,0cm) node[label = below right:$w$] (v) {}
         (4cm,0cm) node[red,label = below:$z$] (w) {}
         (30:1cm) node[red,label = left:$u_1$] (uu1) {}
         (-30:1cm) node[red,label = left:$u_2$] (uu2) {}
         ;
         \draw (uu1) -- (u1) -- (v) -- (w);
         \draw (uu2) -- (u2) -- (v);
    \end{tikzpicture}
    }
    \caption{The structure used in Lemma \ref{admissthird}.}\label{thirdred}
\end{figure}

\fi

\begin{proof}
First of all, we know that $d(z,C')\ge 3$ by Lemma \ref{collectlemma} and \ref{mainlemma}. We can assume $u_1$ is a red $P_1$ by Remark~\ref{remark}. We switch $u_1$ and $u$. By Condition~\eqref{condition3}, $N(H) = 1$, and $d(u,z) = 2$, vertex $u$ together with the path component $P$ which contains $z$ forms a new cycle component of $H$. By a similar argument for $u_2$, we conclude that $u$ and $v$ are both interfaces of $P$ (it can only be the two cases shown in the middle two graphs in Figure~\ref{interface}),  and $z$ is a red end vertex of $P$. However, considering the three neighbours of each red end vertex in the middle two graphs in Figure~\ref{interface}, no suitable vertex can serve as $w$ by Lemma \ref{mainlemma} and \ref{nosameneighbour}. 
\end{proof}
%, as $d(u,z) = 2$ and the unique cycle component of $H$ must be preserved.

\noindent \textbf{Proof of Theorem \ref{mainthe}:}
Recall $C_0$ is the unique cycle component of $H$. Let $C_0'$ be the corresponding structure in $G$ (a $T_1$ or a cycle). By Lemma \ref{anat1}, \ref{anat3} and \ref{ord4}, there is a red $P_1 = x_0$ in $C_0$ which has the good property. Let $y_0$ be the neighbour of $x_0$ outside $C_0'$ such that it forms a new cycle $C_1$ with another path component $L_1$ in $H$ when we switch $x_0$ and $y_0$. We switch $x_0$ and $y_0$ to obtain $C_1$. We carry out the following procedure. At step $i\ge 1$, let $C_i'$ be the corresponding structure of $C_i$. %Note that 

\begin{enumerate}[(1)]
\item  If $C_i'$ is not a $T_2$, then by Lemma~\ref{anat1},~\ref{anat3} and~\ref{ord4}, there are at least two $P_1$s with good property. We switch one such red $P_1$, say $x_i$ (distinct from $y_{i-1}$), with its neighbour $y_i$ outside $C_i'$. We obtain a new cycle component $C_{i+1}$ together with a path component $L_{i+1}$ in $H$, leaving $C_i - x_i$ as a new path component $L_i'$ in $H$.

\item If $C_i'$ is a $T_2$, by Remark~\ref{remark}, we can rearrange the independent sets in $C_i'$ such that $y_{i-1}$ is in a red $P_2$, leaving another vertex $x_i$ as a red $P_1$. We switch $x_i$ with its  neighbour $y_i$ outside $C_i'$.  This would enable $y_i$ to form, by Lemma \ref{ord4}, a new cycle component $C_{i+1}$ with a path component $L_{i+1}$ in $H$, leaving $C_i - x_i$ as a new path component $L_i'$ in $H$. Note that before we  do the rearrangement, $x_i$ was part of a red $P_2$.
\end{enumerate}

%such that after switching $x_0$  with its neighbour  $y_0$ outside $C_0'$,  $y_0$ together with another path component $P_1$ of $H$ forms a new cycle component $C_1$  

\begin{cla}\label{lastclaim}
    For $j\le i - 1$, we have $L_{i + 1}\ne L_{j}'$. 
\end{cla}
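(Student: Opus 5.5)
We argue by contradiction, and the goal is to locate a forbidden configuration near $C_j'$. Fix the least $j\le i-1$ with $L_{i+1}=L_j'$. Recall that $L_j'=C_j-x_j$ arose at step $j$: the red $P_1$ $x_j$ of the cycle component $C_j$ was switched with $y_j$. Every switch performed after step $j$ happens inside or next to the later structures $C_{j+1}',\dots,C_i'$. So the first thing to check is that the vertices of $L_j'$, together with their black neighbours within distance two, are untouched between step $j$ and step $i$. This follows from Lemma~\ref{mainlemma}(3) and Lemma~\ref{collectlemma}(2): everything within distance $2$ of a cycle structure or a $T_1$ is black, with a prescribed colour class, and the only vertex of $C_j'$ that is altered is $x_j$. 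Consequently $P=L_j'$ keeps its neighbourhood $P^*$, and its interfaces are governed by the three cases analysed before Lemma~\ref{admissthird}.

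Next we identify the interfaces of $L_j'$. One candidate is already known: switching $y_j$ back to $x_j$ recreates $C_j$, and $x_j$ lies at distance $2$ from the end vertices of $L_j'$. The exception is when $C_j'$ is a $T_1$ and $x_j$ is adjacent to the common black vertex; this corresponds to the third picture in Figure~\ref{interface}. Suppose $L_{i+1}=L_j'$. Then $y_i$ is also an interface of $L_j'$, since it closes a cycle with $L_j'$. Moreover $y_i\neq x_j$, because $x_j$ is black and adjacent to $y_j\in I$, while $y_i$ is a new vertex adjacent to $x_i\in C_i'$. By the interface analysis, $L_j'$ has at most two interfaces. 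When there are exactly two, they are the blue vertices of the second or third picture of Figure~\ref{interface}.

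We now split into cases according to the shape of $C_j'$ and derive a contradiction in each. If $|C_j|\ge 4$, or $C_j$ is a $T_3$ with $|L_j'|\ge 3$, the first bullet before Lemma~\ref{samein} gives that $L_j'$ has a single interface. That interface is forced to be $x_j$, so $y_i=x_j$, which is a contradiction. If $|L_j'|=2$, we examine the two pictures. The second interface $y_i$ is joined to both end vertices of $L_j'$ through black vertices $z_1,z_2$. Meanwhile $y_i$ is adjacent to the red vertex $x_i$, which is no longer in the current cycle component. This gives exactly the path $u_1 z_1 y_i z_2 u_2$ of Lemma~\ref{admissthird}, with $u_1,u_2$ in $U$ for the current cycle $C_{i+1}$, or, before the switch, in $C_i$. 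That configuration is forbidden, and the contradiction follows. The $T_2$ case of step (2) needs separate treatment, because there the roles of the red $P_2$ and of $x_i$ have been rearranged. In that case we instead use Lemma~\ref{nosameneighbour} to rule out $y_i$ sharing a neighbour with a vertex of $L_j'$.

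The main obstacle is the first step: showing rigorously that the intermediate switches (steps $j+1,\dots,i$) do not change $P^*$ for $P=L_j'$. This matters because the interface analysis assumes all other vertices of $P^*$ are unchanged. We would handle it by induction on $i-j$. The claim with smaller indices shows that the structures $C_{j+1}',\dots,C_i'$ are pairwise distinct from $L_j'$, and by Lemma~\ref{mainlemma}(3) each switch only recolours vertices at distance at least $3$ from the red vertices of $L_j'$.
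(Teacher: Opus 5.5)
The part of your proposal that reduces to $|L_j'|=2$ and to the blue vertices of the middle two pictures of Figure~\ref{interface} follows the paper. The step that is supposed to give the contradiction fails as written, because you apply Lemma~\ref{admissthird} to the wrong cycle.

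\emph{Relative to $C_{i+1}$,} the hypotheses fail. After the switch, $x_i$ is black, not red. Moreover $y_i$, $z_1$, $z_2$ and the ends $u_1,u_2$ of $L_{i+1}=L_j'$ all lie on $C_{i+1}'$. So $u_1z_1y_iz_2u_2$ is just a segment of the new cycle, with $z_1,z_2\in U=V(C_{i+1}')$, and nothing is excluded.

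\emph{Relative to $C_i$,} you have $x_i\in V(C_i)$, contradicting the hypothesis $z\notin V(C)$. Also $u_1,u_2\notin U$, since $L_j'$ is a different component of $H$ at that moment.

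The paper applies the lemma at step $j$ instead. There $C_j$ is the unique cycle component, and $u_1,u_2$ are ends of $L_j'\subseteq C_j$, so they lie in $U$. Since $x_j$ is one interface and $y_i$ is the other, the black vertices joining $y_i$ to $u_1,u_2$ lie off $C_j'$. With $z=x_i$ and $w=y_i$ this is exactly the configuration excluded by Lemma~\ref{admissthird}. This is also where your ``untouched'' statement is really needed: the argument requires $x_i$ to be red already at step $j$. At step $i$, $x_i$ is at distance exactly $3$ from the red vertices of $L_j'$, so it lies outside $P^*$, and an induction that only controls $P^*$ does not give this.

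Your case analysis also misses a sub-case. In the third picture, the left blue vertex reaches both ends of $L_j'$ through the single inner vertex. So $z_1=z_2$ and there is no path of length $4$. This occurs exactly when $x_j$ is the right blue vertex, so that $C_j'$ is the $T_3$ hexagon passing through the inner vertex. Lemma~\ref{admissthird} is unavailable here. The paper closes this case with Lemma~\ref{mainlemma}(3): $x_i$ would be a red vertex within distance $2$ of $C_j'$.

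Smaller points:
\begin{itemize}
\item ``$C_j$ is a $T_3$ with $|L_j'|\ge 3$'' is vacuous, since a $T_3$ has only three red vertices.
\item The $T_2$ situation needs no separate treatment. If $C_j'$ is a $T_2$, it is just the second picture with $x_j$ as a blue vertex. A rearrangement inside $C_i'$ does not affect the argument at step $j$.
\item Your reason for $y_i\neq x_j$ is unclear. The paper argues from the neighbourhood of $x_j$: two black vertices of $C_j'$ and $y_j$, none of which can serve as $x_i$.
\end{itemize}
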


\begin{proof}
Suppose not. Let $i$ be the smallest integer, such that $L_{i + 1} = L_{j}'$ for some $j\le i - 1$. We claim that $y_i = x_j$ is impossible,  since at step $j$, $x_j$ has two black neighbours and neighbour $y_j$, which can not serve as $x_i$.
%because $y_j$ remains red, which would  create a non-cycle component of $H$.
Hence, $L_j'$ must have two interfaces, with $x_j$ being one of them.  By the properties of interfaces we discussed, this forces $y_i$ to be one of the blue vertices shown in the middle two graphs in Figure~\ref{interface}. 
At step $j$, $C_j'$ is the corresponding structure of $C_j$ and contains $x_j$. By Lemma~\ref{admissthird}, $y_i$ can only be the left blue vertex in the third graph in Figure~\ref{interface}; otherwise, $x_i$ and $C_j'$ would form the structure in Lemma~\ref{admissthird}, a contradiction.  However, this implies $x_i$ is a red vertex at distance 2 from $C_j'$ (which is a $T_3$),  contradicting  Lemma \ref{mainlemma}. 
\end{proof}

Since the number of path components in $H$ is finite, the above procedure must eventually terminate by Claim~\ref{lastclaim}, which contradicts $N(H) = 1$. Therefore, if $G$ is not the Petersen graph, then $N(H) = 0$. That is,  $H$ can be reorganized into a forest by a good choice of $I_1$ and $I_2$. We can then color the red vertices with two colors $2_a$ and $2_b$, such that the corresponding vertices in $H$ form two independent sets. Consequently, vertices of the same color ($2_a$ or $2_b$) maintain a pairwise distance of at least 3 in $G$. Combining with $I_1$ and $I_2$, we obtain a packing $(1,1,2,2)$-coloring of $G$.  \hfill \qed

\section{Concluding Remarks}

Gastineau and Togni~\cite{GT1} asked whether every connected subcubic graph except the Petersen graph is packing $(1,1,2,3)$-colorable. We believe it is true and therefore pose the question as a conjecture.

\begin{conj}
Every connected subcubic graph except the Petersen graph is packing $(1,1,2,3)$-colorable.    
\end{conj}

	\subsection*{Acknowledgements}
		This work was supported by the National Key Research and Development Program of China (2023YFA1010203), the National Natural Science Foundation of China (12471336, 12501473, 12401455, 12571369), and the Innovation Program for Quantum Science and Technology (2021ZD0302902). The research of X. Liu was supported by the National Natural Science Foundation of China under grant No.~12401466.

\end{document}